\numberwithin{equation}{section}
\newtheorem{thm}{Theorem}[section]
\newtheorem{lem}{Lemma}[section]
\newtheorem{rem}{Remark}[section]
\newcommand{\ed}{\end {document}}
\begin{document}

\title{The BDF3/EP3 scheme for MBE with no slope selection is stable}


\author[D. Li]{ Dong Li}
\address{D. Li, SUSTech International Center for Mathematics and Department of Mathematics, Southern University of Science and Technology, Shenzhen 518055, P.R. China}
\email{lid@sustech.edu.cn}

\author[C.Y. Quan]{Chaoyu Quan}	
\address{C.Y. Quan, SUSTech International Center for Mathematics, Southern University of Science and Technology,
	Shenzhen 518055, P.R. China}
\email{quancy@sustech.edu.cn}

\author[W. Yang]{ Wen Yang}
\address{\noindent W. ~Yang,~Wuhan Institute of Physics and Mathematics, Innovation Academy for Precision Measurement Science and Technology, Chinese Academy of Sciences, Wuhan 430071, P. R. China.}
\email{wyang@wipm.ac.cn}

\begin{abstract}
We consider the classical molecular beam epitaxy (MBE) model with logarithmic type potential known as no-slope-selection. We employ a third  order backward differentiation (BDF3)  in time with implicit treatment of
the surface diffusion term. The nonlinear term is approximated by a third order explicit extrapolation
(EP3)  formula.  We exhibit mild time step constraints under which the modified energy dissipation
law holds. We  break the second Dahlquist barrier and develop a new theoretical framework to prove unconditional
uniform energy boundedness with no size restrictions on the time step. This is the first unconditional 
result for third order BDF methods applied to the  MBE models without introducing any stabilization term or fictitious variable. The analysis can be generalized to a restrictive class of
phase field models whose nonlinearity has bounded derivatives.
A  novel theoretical framework is also established for the error 
analysis of high order methods.
\end{abstract}

\maketitle

\section{Introduction}
In this work we consider the following molecular beam epitaxy (MBE) model with no slope selection
(cf. \cite{ll2003}):
\begin{align} \label{1.1}
\partial_t h
= -\eta^2 \Delta^2 h   -\nabla \cdot \left(
\frac{\nabla h} {1+|\nabla h|^2} \right), \qquad (t,x) \in (0,\infty) \times \Omega.
\end{align}
Here $\Omega = \mathbb T^2=[-\pi, \pi]^2$ is taken to be the usual two-dimensional
periodic torus and
\begin{align}
|\nabla h |^2 = (\partial_{x_1} h)^2 + (\partial_{x_2} h)^2.
\end{align}
 The function $h=h(t,x): \Omega \to \mathbb R$ represents a scaled height function of the thin film in a co-moving
 frame. The term  $\Delta^2 h$ corresponds to capillarity-driven isotropic surface diffusion (Mullins \cite{m1957}, Herring \cite{h1951}) and the  parameter $\eta^2>0$ is the diffusion coefficient.
 The equation \eqref{1.1} naturally arises from the $L^2$ gradient flow  of the energy functional
\begin{align} \label{1.3}
\mathcal E (h)= \int_{\Omega}
\Bigl( -\frac 12 \log(1+|\nabla h|^2) + \frac 12 \eta^2 | \Delta h |^2 \Bigr) dx.
\end{align}
Due to the negative sign in the logarithmic potential which corresponds to the
Ehrlich-Schwoebel effect, the system often favors uphill atom current and exhibits 
mound-like structures in the film.  If one assumes $|\nabla h |\ll 1$, then the energy
functional \eqref{1.3} can be approximated by
\begin{align} \label{1.3a}
\mathcal E(h) = \int_{\Omega}
\Bigl( \frac 14 (|\nabla h|^2-1)^2 + \frac 12 \eta^2 |\Delta h|^2 \Bigr)dx.
\end{align}
The $L^2$-gradient flow of \eqref{1.3a} leads to the standard MBE model with
slope-selection. The name is derived from the fact that typical solutions corresponding
to \eqref{1.3a} usually ``selects" the slope $|\nabla h| \approx 1$ which exhibits 
pyramidal structures. On the other hand typical solutions to \eqref{1.1} have mound-like
structures and the slopes may have a large upper bound. Analysis-wise these two systems
are vastly different. A well-known difficulty associated with \eqref{1.3a} is the lack of
good a priori Lipschitz bounds (see \cite{lqt2017,lwy2020,lt2021,lqt2016}). In stark contrast the
system \eqref{1.1} has a very benign nonlinearity in the sense that the nonlinear function
$g(z)=-z/(1+|z|^2)$ has bounded derivatives of all orders. This  renders the analysis and simulation quite appealing.

For smooth solutions to \eqref{1.1},  the mean-value of $h$ is preserved in time.
Moreover,   the basic energy conservation law takes the form
\begin{align}
\mathcal E( h(t_2) ) + \int_{t_1}^{t_2} \| \partial_t h \|_2^2 dt = \mathcal
E (h(t_1) ), \qquad\forall\, 0\le t_1 <t_2 <\infty.
\end{align}
This leads to
\begin{align} \label{1.6}
\mathcal E (h(t) ) \le \mathcal E (h(0) ), \qquad\forall\, t>0.
\end{align}
Since the energy is coercive (Lemma \ref{lem_coer}) and the mean-value
of $h$ is preserved, \eqref{1.6} gives global $H^2$ control of the solution. The wellposedness
and regularity of solutions to \eqref{1.1} follows from this and the fact that  the nonlinear function $g(z)=-z/(1+|z|^2)$ have bounded derivatives of all orders.

On the numerical side there is by now a rather extensive literature on designing and analyzing
energy stable numerical schemes for phase field models including Allen-Cahn, Cahn-Hilliard, MBE
and so on, including the convex-splitting schemes \cite{eyre1998unconditionally,wang2010unconditionally,chen2012linear}, 
the implicit-explicit schemes (without stabilization) \cite{lt2021,lt2021-a},
the stabilization schemes  \cite{zhu1999coarsening,xt2006,shen2010numerical,lq2017,SongShu17}, and the scalar auxiliary variable schemes \cite{shen2018scalar,shen2019new}.  A fundamental challenge is
to design fast and accurate, easy to implement and stable numerical schemes for problems possessing
a myriad of temporal and spatial scales. Concerning the epitaxy thin film model, many existing works only with the analysis of  first order and second order in time methods such as first order Backward Differentiation Formula in time with first order extrapolation for the nonlinearity
(BDF1/EP1), second order Backward Differentiation Formula with second order 
extrapolation (BDF2/EP2) and implicit treatment of the surface diffusion term (cf \cite{xt2006,lqt2016} and the references therein). These implicit-explicit (IMEX)
methods are often bundled together with some judiciously chosen stabilization
terms in order to accommodate large time steps and improve energy stability (\cite{xt2006, lqt2016, lq2017-1, lq2017, L21}). Concerning third order accurate schemes for the MBE models, there are very few 
works devoted to the analysis of BDF3 type IMEX methods. In this connection we mention the recent
impressive work of Hao, Huang and Wang (\cite{HHWang}) who considered a BDF3/AB3 discretization scheme with an
additional stabilization term
\begin{align}
-A \Delta t^2 \Delta^2 (h^{n+1}-h^n).
\end{align}
In \cite{HHWang} it was shown that if $A\ge O(\eta^{-2})$ then one can have unconditional energy dissipation
for any time step. We should point out that, whilst embracing additional stabilization terms could
improve the stability of the algorithm, it might introduce unwarranted error terms, which renders
the choice of the stabilization parameter a rather delicate and nontrivial task.  Fine-tuning the form of the stabilization terms is in general a technically demanding task and we refer to the introduction 
of  \cite{lt2021-a} for more in-depth discussions and related bibliography.

The main contribution of this work is as follows.
\begin{enumerate}
\item We consider BDF3/EP3 semi-discretization scheme for the MBE model with no slope selection.
 We quantify explicit and mild time step constraints under which the modified energy dissipation
law holds. 

\item We introduce a new theoretical framework and prove unconditional
uniform energy boundedness \emph{with no size restrictions on the time step}. This is the first unconditional 
result for third order BDF methods applied to the  MBE models without introducing any stabilization terms or fictitious variables. 

\item We develop a novel theoretical framework for the error analysis of BDF3 type methods. This framework
is quite robust and can be generalized to higher order methods. 
\end{enumerate}

Our modest goal is to introduce a new paradigm for the stability and error analysis of phase field models.
In particular for problems whose nonlinearity are sufficiently benign (e.g. having bounded derivatives
of first few orders),   one can establish the following:
\begin{table}
\centering
\renewcommand\arraystretch{1.75}
\begin{tabular}{|l|l|}
\hline
\cline{1-2}
$0<\tau<\infty$ & uniform energy bound \\
\hline
 $0<\tau<\tau_{\mathrm c}$ & energy dissipation \\
\hline
\cline{1-2}
\end{tabular}
\end{table}

In the above $\tau_c$ can be quantified in terms of the parameters of the model under study.
In our MBE model \eqref{1.1}, the optimal $\tau_c= O(\eta^2)$ which is consistent with the typical
temporal-spatial ratio by using dimension analysis.

The rest of this paper is organized as follows. In Section 2 we prove modified energy dissipation
under mild time step constraints. In Section 3 we establish unconditional energy stability which is independent of the time step. In Section 4 we establish the error analysis for the
BDF3/EP3 scheme. In Section 5 we carry
out several numerical simulations. The final section is devoted to concluding remarks.


\section{Energy decay for BDF3/EP3}
We consider the following BDF3/EP3 scheme:
\begin{equation}\label{eq:sch2}
\frac{11h^{n+1}-18h^n+9h^{n-1}-2h^{n-2} }{6\tau}=-\eta^2\Delta^2 h^{n+1}+\nabla \cdot g\left(3\nabla h^n-3\nabla h^{n-1} +\nabla h^{n-2}\right), \quad n\ge 2,
\end{equation}
where
\begin{align}
g(z) = -\frac {z }{1+|z|^2}, \quad z\in \mathbb R^2.
\end{align}

To kick start the scheme we can compute $h^1$ and $h^2$ via a first and second-order
scheme respectively.

\begin{lem} \label{gLip0.1}
Consider  $g(z) =- z /{(1+|z|^2)}$ for $z\in \mathbb R^2$.  We have
\begin{align}
&|g(x)-g(y) | \le  |x-y|, \qquad\forall\, x, y \in \mathbb R^2, \label{Lip0.1a}\\
& x^T (Dg)(z) x \le \frac 18 |x|^2, \qquad\forall\, x, z \in \mathbb R^2. \label{Lip0.1b}
\end{align}
\end{lem}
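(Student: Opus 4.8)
The plan is to reduce both estimates to the explicit form of the Jacobian of $g$. Writing $r=\abs z^2$ and differentiating $g_i(z)=-z_i/(1+r)$ componentwise, I expect to obtain
\begin{align}
(Dg)(z) = -\frac{1}{1+r}\, I + \frac{2}{(1+r)^2}\, z z^T,
\end{align}
a symmetric $2\times 2$ matrix depending on $z$ only through $r=\abs z^2$ and the rank-one term $zz^T$. Everything below follows from analyzing this matrix, so the whole argument is elementary linear algebra plus a one-variable optimization.

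For the quadratic-form bound \eqref{Lip0.1b}, I would contract with an arbitrary $x\in\mathbb R^2$ to get
\begin{align}
x^T (Dg)(z)\, x = -\frac{\abs x^2}{1+r} + \frac{2\,(z\cdot x)^2}{(1+r)^2}.
\end{align}
Applying Cauchy--Schwarz in the form $(z\cdot x)^2\le r\,\abs x^2$ bounds the right-hand side by $\abs x^2\,(r-1)/(1+r)^2$, so it remains to maximize $f(r)=(r-1)/(1+r)^2$ over $r\ge 0$. A routine computation gives $f'(r)=(3-r)/(1+r)^3$, so the maximum is attained at $r=3$ with $f(3)=\tfrac18$, which yields \eqref{Lip0.1b}.

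For the Lipschitz bound \eqref{Lip0.1a} I would pass from the quadratic form to the operator norm. Since $(Dg)(z)$ is symmetric, its eigenvalues are $-1/(1+r)$ (on the orthogonal complement of $z$) and $(r-1)/(1+r)^2$ (in the direction of $z$), and one checks directly that both lie in $[-1,1]$ for every $r\ge0$, so $\norm{(Dg)(z)}_{\mathrm{op}}\le 1$ uniformly in $z$. Writing $g(x)-g(y)=\int_0^1 (Dg)\bigl(y+t(x-y)\bigr)(x-y)\,dt$ and estimating under the integral then gives $\abs{g(x)-g(y)}\le \abs{x-y}$.

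The main obstacle, though a mild one, is that the two estimates are genuinely different and cannot be deduced from one another. The quadratic-form bound \eqref{Lip0.1b} only controls the largest eigenvalue $(r-1)/(1+r)^2$, whose sharp value $\tfrac18$ is precisely what the energy analysis needs, whereas the Lipschitz constant is governed by the most negative eigenvalue $-1/(1+r)$, which approaches $-1$ as $r\to0$. Hence the sharp factor $\tfrac18$ does not transfer to \eqref{Lip0.1a}, and one must verify the operator-norm bound separately; the only point requiring a moment's care is the elementary check that $\abs{r-1}\le (1+r)^2$ for all $r\ge0$.
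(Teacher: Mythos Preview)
Your proof is correct and follows essentially the same route as the paper: both compute the Jacobian $(Dg)(z)=-\tfrac{1}{1+r}I+\tfrac{2}{(1+r)^2}zz^T$, reduce the two estimates to bounding its eigenvalues/quadratic form, optimize the scalar function $(r-1)/(1+r)^2$ to obtain the sharp constant $\tfrac18$, and then invoke the integral representation $g(x)-g(y)=\int_0^1 (Dg)(y+t(x-y))(x-y)\,dt$ for the Lipschitz bound. The only cosmetic difference is that the paper works with $g_1=-g$ and phrases the eigenvalue bounds via a $b,b^\perp$ decomposition rather than naming the eigenvectors explicitly.
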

\begin{rem}
Our proof also extends to general dimensions $d\ge 1$.
\end{rem}
\begin{proof}
Denote $g_1(z)=z/(1+|z|^2)$. 
By using the Fundamental Theorem of Calculus, we have
\begin{align}
g_1(x)-g_1(y) = \int_0^1 (D g_1) (y+\theta(x-y) ) d\theta (x-y).
\end{align}
It suffices for us to examine the spectral norm of the symmetric matrix $(Dg_1)(z)$, where
\begin{align}
(Dg_1)(z) = \frac {\delta_{ij}} {1+|z|^2} - \frac {2 z_i z_j }{ (1+|z|^2)^2}.
\end{align}
Now take any $b\in \mathbb R^d $ with $|b|=1$, and let $b^{\perp}$ be a unit vector
orthogonal to $b$. Clearly 
\begin{align}
z= (z\cdot b) b + (z\cdot b^{\perp} ) b^{\perp}, \qquad |z|^2 =
(z\cdot b)^2 + (z\cdot b^{\perp})^2.
\end{align}
Then
\begin{align}
b^{T} (Dg_1)(z) b & = \frac 1 {1+|z|^2} -\frac {2(b\cdot z)^2} {(1+|z|^2)^2} \le 1.
\end{align}
Also 
\begin{align} \label{Lip0.1cc}
b^{T} (Dg_1)(z) b 
&=\;\frac { 1 } {(1+|z|^2)^2} + \frac { (z\cdot b^{\perp})^2-(b\cdot z)^2} {(1+|z|^2)^2} \notag
 \\
& \ge\;   \frac{1- (b\cdot z)^2} {(1+|z|^2)^2} \ge 
\inf_{s\ge 0} \frac { 1-s}{(1+s)^2} \ge -\frac 18.
\end{align}
It follows that the spectral norm of $Dg$ is bounded by $1$ and \eqref{Lip0.1a} follows easily.
The estimate \eqref{Lip0.1b} follows from \eqref{Lip0.1cc}.
\end{proof}
\begin{lem}[Coercivity of the energy] \label{lem_coer}
Let $\eta>0$. 
For any $h \in H^2(\mathbb T^2)$, we have
\begin{align}
c_1 \|\Delta h\|_2^2 -c_2\le 
\int_{\mathbb T^2} \Bigl( -\frac 12 \log(1+|\nabla h|^2) +
\frac 12 \eta^2 |\Delta h|^2 \Bigr) dx 
\le \frac 12 \eta^2\| \Delta h\|_2^2 ,
\end{align}
where $c_1$, $c_2$  are positive constants depending only on $\eta$.
\end{lem}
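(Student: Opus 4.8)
The plan is to handle the two inequalities separately, with essentially all the work going into the lower bound. For the upper bound I would simply use that $\log(1+|\nabla h|^2)\ge 0$ pointwise, so the logarithmic term contributes a nonpositive integrand and the energy density is bounded above by $\frac12\eta^2|\Delta h|^2$; integrating gives $\frac12\eta^2\|\Delta h\|_2^2$ at once.

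For the lower bound the guiding principle is that the logarithmic Ehrlich--Schwoebel potential is \emph{subquadratic}, so its negative contribution can be absorbed into an arbitrarily small fraction of the diffusion term. Concretely, for any $\eps>0$ the map $s\mapsto \log(1+s)-\eps s$ is continuous on $[0,\infty)$ and tends to $-\infty$, hence attains a finite maximum $C_\eps:=\sup_{s\ge 0}\bigl(\log(1+s)-\eps s\bigr)$. This yields the pointwise estimate $-\frac12\log(1+|\nabla h|^2)\ge -\frac12\eps|\nabla h|^2-\frac12 C_\eps$, and after integrating over $\mathbb T^2$,
\begin{align}
\int_{\mathbb T^2}\Bigl(-\tfrac12\log(1+|\nabla h|^2)+\tfrac12\eta^2|\Delta h|^2\Bigr)\,dx
\;\ge\; \tfrac12\eta^2\|\Delta h\|_2^2-\tfrac12\eps\|\nabla h\|_2^2-\tfrac12 C_\eps\,|\mathbb T^2|.
\end{align}

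It then remains to dominate $\|\nabla h\|_2^2$ by $\|\Delta h\|_2^2$, which on the torus is a Poincar\'e inequality for the mean-zero field $\nabla h$. I would argue in Fourier: writing $h=\sum_k \hat h_k e^{ik\cdot x}$ gives $\|\nabla h\|_2^2=\sum_{k\ne 0}|k|^2|\hat h_k|^2$ and $\|\Delta h\|_2^2=\sum_{k\ne 0}|k|^4|\hat h_k|^2$, and since the smallest nonzero frequency on $\mathbb T^2=[-\pi,\pi]^2$ obeys $|k|\ge 1$, we have $|k|^2\le|k|^4$ for every $k\ne 0$, hence $\|\nabla h\|_2^2\le\|\Delta h\|_2^2$. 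Substituting this and choosing $\eps=\tfrac12\eta^2$ would produce
\begin{align}
\int_{\mathbb T^2}\Bigl(-\tfrac12\log(1+|\nabla h|^2)+\tfrac12\eta^2|\Delta h|^2\Bigr)\,dx
\;\ge\; \tfrac14\eta^2\|\Delta h\|_2^2-\tfrac12 C_{\eta^2/2}\,|\mathbb T^2|,
\end{align}
so one may take $c_1=\tfrac14\eta^2$ and $c_2=\tfrac12 C_{\eta^2/2}\,|\mathbb T^2|$, both depending only on $\eta$.

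I do not expect a genuine obstacle here; the argument is short and the structural mechanism (subquadratic potential absorbed by the $H^2$ term via Poincar\'e) is robust. The only points requiring mild care are bookkeeping: keeping the dependence of $C_\eps$, and hence $c_2$, explicit in $\eta$, which is automatic once $\eps$ is tied to $\eta^2$; and ensuring $c_2$ is strictly positive, which can be arranged by enlarging it (e.g.\ replacing $c_2$ by $c_2+1$), since enlarging $c_2$ only weakens the lower bound and preserves its validity.
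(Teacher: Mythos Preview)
Your proof is correct and follows essentially the same strategy as the paper: both exploit that $\log(1+|\nabla h|^2)$ grows sub-polynomially and absorb its negative contribution into the diffusion term, with the paper's one-line version invoking the linear bound $\log(1+|\nabla h|^2)\le \mathrm{const}\cdot(1+|\nabla h|)$ in place of your quadratic bound $\log(1+s)\le \eps s+C_\eps$. The remaining absorption step (your Poincar\'e inequality $\|\nabla h\|_2^2\le\|\Delta h\|_2^2$) is left implicit in the paper but is the same mechanism.
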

\begin{proof}
This follows from the simple observation that
\begin{align}
-\mathrm{const}\cdot( 1+ |\nabla h|) \le -\log(1+ |\nabla h|^2)  \le 0.
\end{align}

\end{proof}

\begin{thm}[Modified energy dissipation]\label{thm5.1}
Consider the scheme \eqref{eq:sch2}. Assume  $h^0$, $h^1$, $h^2 \in H^2(\mathbb T^2)$ and
\begin{align}
0<\tau \le \alpha_1 \eta^2, \qquad\alpha_1=\frac {512}{ 7203}\approx 0.071.
\end{align}
Then
\begin{equation}
\widetilde E_{n+1} \le  \widetilde E_{n}, \quad \forall\, n\ge 2,
\end{equation}
where (below $\delta h^n=h^n-h^{n-1}$)
\begin{align}
\widetilde E_n  &= E_n + \frac 3 {4\tau} \|\delta h^n \|_2^2 + \frac 1 {6\tau}\| \delta h^{n-1} \|_2^2
 + \frac 3 2\| \nabla \delta h^n \|_2^2 + \frac 12 \| \nabla \delta h^{n-1} \|_2^2; \\
 E_n &= \mathcal E (h^n) =
 \int_{\Omega}
\Bigl( -\frac 12 \log(1+|\nabla h^n|^2) + \frac 12 \eta^2 | \Delta h^n |^2 \Bigr) dx.
\end{align}
Furthermore if  for some $\alpha_2>0$,
\begin{align} \label{furt01}
\| \delta h^2 \|_2^2+ \| \delta h^1 \|_2^2 \le \alpha_2 \tau,
\end{align}
then we have the uniform $H^2$ bound:
\begin{align} \label{3.15}
\sup_{n\ge 3} ( \| h^n \|_2 + \| \Delta h^n \|_2) \le \widetilde{C}_1<\infty,
\end{align}
where $\widetilde{C}_1>0$ depends only on ($h^0$, $h^1$, $h^2$, $\eta$, $\alpha_2$). 
\end{thm}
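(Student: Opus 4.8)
The plan is to run a discrete energy estimate by testing the scheme \eqref{eq:sch2} against the first difference $\delta h^{n+1}=h^{n+1}-h^n$, and to recognize $\widetilde E_n$ as the quantity that makes the resulting quadratic forms telescope. First I would record the purely algebraic identity obtained by writing $h^n,h^{n-1},h^{n-2}$ in terms of $h^{n+1}$ and the increments, namely
\[
11h^{n+1}-18h^n+9h^{n-1}-2h^{n-2}=11\,\delta h^{n+1}-7\,\delta h^n+2\,\delta h^{n-1}.
\]
Taking the $L^2$ inner product of \eqref{eq:sch2} with $\delta h^{n+1}$, the implicit surface-diffusion term integrates by parts into a telescoping piece plus a \emph{good} negative remainder,
\[
-\eta^2\langle\Delta^2h^{n+1},\delta h^{n+1}\rangle=-\tfrac12\eta^2\big(\|\Delta h^{n+1}\|_2^2-\|\Delta h^n\|_2^2\big)-\tfrac12\eta^2\|\Delta\,\delta h^{n+1}\|_2^2,
\]
while for the nonlinearity I would use that $g=\nabla_z F$ with $F(z)=-\tfrac12\log(1+|z|^2)$, so that by the Fundamental Theorem of Calculus the exact logarithmic-energy increment equals $\int_\Omega\!\int_0^1 g(\nabla h^n+s\nabla\delta h^{n+1})\cdot\nabla\delta h^{n+1}\,ds\,dx$. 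This produces the identity $E_{n+1}-E_n=-\tfrac{1}{6\tau}\langle 11\delta h^{n+1}-7\delta h^n+2\delta h^{n-1},\delta h^{n+1}\rangle-\tfrac12\eta^2\|\Delta\delta h^{n+1}\|_2^2+R$, where $R$ is the mismatch between the tested nonlinear term and the exact increment.

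The key comparison is between the scheme's extrapolated argument and the integration variable. A direct computation gives $\hat h^{n+1}:=3h^n-3h^{n-1}+h^{n-2}=h^n+2\,\delta h^n-\delta h^{n-1}$, so the discrepancy inside $g$ is $s\nabla\delta h^{n+1}-2\nabla\delta h^n+\nabla\delta h^{n-1}$. I would split this into the part aligned with $\nabla\delta h^{n+1}$ (namely $s\nabla\delta h^{n+1}$) and the transverse part $w:=-2\nabla\delta h^n+\nabla\delta h^{n-1}$. Estimating $R=\int_\Omega\!\int_0^1[g(\nabla h^n+s\nabla\delta h^{n+1})-g(\nabla\hat h^{n+1})]\cdot\nabla\delta h^{n+1}\,ds\,dx$ through $Dg$ along the connecting segment, the aligned part is controlled by the \emph{one-sided} bound \eqref{Lip0.1b}, which after integrating $\int_0^1\tfrac{s}{8}\,ds=\tfrac1{16}$ contributes $\tfrac1{16}\|\nabla\delta h^{n+1}\|_2^2$, while the transverse part is controlled by the spectral-norm bound $\|Dg\|\le1$ from Lemma \ref{gLip0.1}, giving
\[
|R|\le \tfrac1{16}\|\nabla\delta h^{n+1}\|_2^2+2\|\nabla\delta h^{n+1}\|_2\|\nabla\delta h^n\|_2+\|\nabla\delta h^{n+1}\|_2\|\nabla\delta h^{n-1}\|_2.
\]
The algebraic heart is then the $\tau^{-1}$-weighted $L^2$ form: writing $a=\delta h^{n+1},\ b=\delta h^n,\ c=\delta h^{n-1}$ and adding the $L^2$-increments of $\widetilde E$, completion of the square yields
\[
Q(a,b,c)=-2\|a\|_2^2-\tfrac72\|b-a\|_2^2-\|c+a\|_2^2\le -2\|a\|_2^2,
\]
which is the negative-definiteness that plays the role of A-stability for BDF3.

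Assembling, I would apply Young's inequality to the two cross terms in $R$; the coefficients $\tfrac32,\tfrac12$ in $\widetilde E_n$ are chosen precisely so that the gradient increments at levels $n$ and $n-1$ cancel exactly against the telescoping $-\|\nabla\delta h^n\|_2^2-\tfrac12\|\nabla\delta h^{n-1}\|_2^2$, leaving only $\big(\tfrac1{16}+3\big)\|\nabla\delta h^{n+1}\|_2^2=\tfrac{49}{16}\|\nabla\delta h^{n+1}\|_2^2$. This leftover, produced by the \emph{explicit} extrapolation and with \emph{no} stabilization available, is what must be absorbed; I expect this to be the main obstacle. Interpolating $\|\nabla\delta h^{n+1}\|_2^2\le\|\Delta\delta h^{n+1}\|_2\|\delta h^{n+1}\|_2$ (integration by parts) and splitting by Young with weight $\theta=\tfrac{16\eta^2}{49}$ gives $\tfrac{49}{16}\|\nabla\delta h^{n+1}\|_2^2\le\tfrac12\eta^2\|\Delta\delta h^{n+1}\|_2^2+\tfrac{2401}{512\eta^2}\|\delta h^{n+1}\|_2^2$. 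The first term is swallowed by the implicit surface-diffusion remainder, and the second must be dominated by the $-\tfrac{1}{3\tau}\|\delta h^{n+1}\|_2^2$ coming from $Q$; the requirement $\tfrac{2401}{512\eta^2}\le\tfrac1{3\tau}$ is exactly $\tau\le\tfrac{512}{7203}\eta^2=\alpha_1\eta^2$, and this is where $\tau_c=O(\eta^2)$ is pinned down. This proves $\widetilde E_{n+1}\le\widetilde E_n$.

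For the uniform $H^2$ bound the argument is short. Monotonicity gives $\widetilde E_n\le\widetilde E_2$ for all $n\ge2$, and assumption \eqref{furt01} makes the start-up terms $\tfrac{3}{4\tau}\|\delta h^2\|_2^2+\tfrac1{6\tau}\|\delta h^1\|_2^2\le\tfrac34\alpha_2+\tfrac16\alpha_2$ finite independently of $\tau$, so $\widetilde E_2$ is bounded by a constant depending only on $(h^0,h^1,h^2,\eta,\alpha_2)$. Dropping the nonnegative increment terms gives $E_n\le\widetilde E_n\le\widetilde E_2$, whence coercivity (Lemma \ref{lem_coer}) yields $c_1\|\Delta h^n\|_2^2\le\widetilde E_2+c_2$, a uniform bound on $\|\Delta h^n\|_2$. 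Finally, integrating \eqref{eq:sch2} over $\Omega$ shows that the mean $\int_\Omega h^n$ is conserved, so Poincar\'e's inequality upgrades the bound on $\|\Delta h^n\|_2$ to a uniform bound on $\|h^n\|_2$, which is \eqref{3.15}.
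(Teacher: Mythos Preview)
Your proposal is correct and follows essentially the same route as the paper: test \eqref{eq:sch2} against $\delta h^{n+1}$, use the one-sided bound \eqref{Lip0.1b} for the Taylor remainder and the Lipschitz bound \eqref{Lip0.1a} for the extrapolation error, apply Young to the cross terms, and close with the interpolation $\|\nabla\delta h^{n+1}\|_2^2\le\|\delta h^{n+1}\|_2\|\Delta\delta h^{n+1}\|_2$ to land exactly on $\tau\le\tfrac{512}{7203}\eta^2$. The only cosmetic differences are that you diagonalize the BDF3 $L^2$ form by an exact completion of squares (the paper instead uses Cauchy--Schwarz, arriving at the same usable residual $\tfrac{1}{3\tau}\|\delta h^{n+1}\|_2^2$) and you assemble the gradient leftover directly as $\tfrac{49}{16}$ whereas the paper writes it as $\tfrac{25}{16}+\tfrac32$; for the $\|h^n\|_2$ bound, note that the mean is literally conserved only when $\bar h^0=\bar h^1=\bar h^2$, but in general the scalar recurrence $11\bar h^{n+1}=18\bar h^n-9\bar h^{n-1}+2\bar h^{n-2}$ still keeps $\bar h^n$ bounded in terms of the initial means, which is all you need.
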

\begin{rem}
The assumption \eqref{furt01} is quite reasonable since typically
$h^1-h^0=O(\tau)$ and $h^2-h^1=O(\tau)$ if we compute $h^1$ and $h^2$ via
a first order scheme such as BDF1/EP1 and a second order scheme such as BDF2/EP2 respectively.
\end{rem}


\begin{proof}

Denote
\begin{align}
\delta h^n = h^n -h^{n-1}.
\end{align}

Taking the $L^2$-inner product with $\delta h^{n+1}$ on both sides of \eqref{eq:sch2}, we obtain
\begin{align}\label{eq:bdf2int}
 & (\frac{11h^{n+1}-18h^n+9h^{n-1}-2h^{n-2} }{6\tau}, \delta h^{n+1})
+\frac 1{2} \eta^2( \| \Delta h^{n+1} \|_2^2 -\| \Delta h^n \|_2^2
 + \| \Delta( \delta h^{n+1})  \|_2^2)  \notag \\
&=\;-(g(\nabla h^n), \nabla (\delta h^{n+1}) )- (g(3\nabla h^n-3\nabla h^{n-1}+\nabla h^{n-2})-g(\nabla h^{n}), \nabla (\delta h^{n+1}) ).
\end{align}

Observe that
\begin{align}  \label{5.9t0}
\frac{11h^{n+1}-18h^n+9h^{n-1} -2h^{n-2} }{6\tau}
& = \frac {11\delta h^{n+1} } {6\tau} - \frac {7\delta h^{n} } {6\tau}+\frac{\delta h^{n-1}}
{3\tau}.
\end{align}

By using \eqref{5.9t0} and the Cauchy-Schwartz inequality, we  have
\begin{equation}
(\frac{11h^{n+1}-18h^n+9h^{n-1} -2h^{n-2} }{6\tau}, \delta h^{n+1})  
\ge \frac {13}{12\tau} \| \delta h^{n+1} \|_2^2
-\frac 7{12\tau} \| \delta h^n\|_2^2 - \frac 1 {6\tau} \| \delta h^{n-1} \|_2^2.
\end{equation}

We set
\begin{align}
F_n=-\frac12\int_{\mathbb{T}^2}\log(1+|\nabla h^n|^2)dx.
\end{align}
Using Taylor expansion and Lemma \ref{gLip0.1}, we obtain
\begin{align}
&F_{n+1} \le F_n + (g(\nabla h^n), \nabla \delta h^{n+1} )
+ \frac 12 \cdot \frac 18 \| \nabla \delta h^{n+1} \|_2^2.
\end{align}
Here Lemma \ref{gLip0.1} is used to control the quadratic term in the Taylor expansion.
This implies
\begin{align}
-(g(\nabla h^n), \nabla \delta h^{n+1} ) \le F_n -F_{n+1}
+ \frac 1 {16} \| \nabla \delta h^{n+1} \|_2^2.
\end{align}

On the other hand by using Lemma \ref{gLip0.1}, we have
\begin{equation}
\begin{aligned}
- (g(3\nabla h^n-3\nabla h^{n-1}+\nabla h^{n-2})-g(\nabla h^{n}), \nabla (\delta h^{n+1}) )& \le 
(2\| \nabla \delta h^n\|_2+\|\nabla \delta h^{n-1} \|_2) \cdot \| \nabla \delta h^{n+1}\|_2.
\end{aligned}
\end{equation}
Collecting the estimates, we have
\begin{align}
& \frac {13}{12\tau} \| \delta h^{n+1} \|_2^2
-\frac 7{12\tau} \| \delta h^n\|_2^2 - \frac 1 {6\tau} \| \delta h^{n-1} \|_2^2+ \eta^2 \frac 12 \|\Delta (\delta h^{n+1} )\|_2^2 \notag \\
\le & E_n-E_{n+1} + \frac {25}{16} \| \nabla (\delta h^{n+1})\|_2^2+
 \| \nabla \delta h^n\|_2^2+\frac 12 \| \nabla \delta h^{n-1}\|_2^2.
\end{align}
Rearranging the terms, we obtain
\begin{align} 
& E_{n+1} + \frac {13}{12\tau} \| \delta h^{n+1} \|_2^2 -\frac {25}{16}
\| \nabla \delta h^{n+1} \|_2^2 +\eta^2 \frac 12 \| \Delta \delta h^{n+1} \|_2^2 \notag \\
\le & \; E_n + \frac 7 {12\tau} \| \delta h^n \|_2^2
+\frac 1 {6\tau} \| \delta h^{n-1} \|_2^2 +
\| \nabla \delta h^n \|_2^2 + \frac 12 \| \nabla \delta h^{n-1} \|_2^2. \label{arr3.0}
\end{align}
Now observe that for $0<\tau \le \frac {512} {7203} \eta^2$, we have
\begin{align}
 & \frac 1{3\tau} \| \delta h^{n+1} \|_2^2 + \eta^2 \frac 12
 \| \Delta \delta h^{n+1} \|_2^2 - \frac {25}{16} \| \nabla \delta h^{n+1} \|_2^2 \notag \\
 \ge & \; \Bigl( \sqrt{ \frac {2\eta^2}{3\tau} } -\frac {25}{16} \Bigr) \| \nabla \delta h^{n+1} \|_2^2
 \ge \frac 32 \| \nabla \delta h^{n+1} \|_2^2. 
 \end{align}
 The decay of the modified energy then follows. The estimate \eqref{3.15}
 follows from \eqref{furt01} and Lemma \ref{lem_coer}.
\end{proof}
\begin{rem}
We explain how to fix the constants in the modified energy. Suppose we want to arrive
at the inequality
\begin{align} \label{arr1}
 & E_{n+1} + \alpha_1 \| \delta h^{n+1} \|_2^2 +
 \alpha_2 \| \delta h^n \|_2^2 +\beta_1 \| \nabla \delta h^{n+1} \|_2^2 +
 \beta_2 \| \nabla \delta h^n \|_2^2 \notag \\
 \le & \; E_n + \alpha_1 \|\delta h^n \|_2^2 + \alpha_2\| \delta h^{n-1} \|_2^2
 + \beta_1 \| \nabla \delta h^n \|_2^2 + \beta_2 \| \nabla \delta h^{n-1} \|_2^2.
 \end{align}
 Then \eqref{arr1} is equivalent to
 \begin{align} \label{arr2}
 & E_{n+1} + \alpha_1 \| \delta h^{n+1} \|_2^2  +\beta_1 \| \nabla \delta h^{n+1} \|_2^2  \notag \\
 \le & \; E_n + (\alpha_1-\alpha_2) \|\delta h^n \|_2^2 + \alpha_2\| \delta h^{n-1} \|_2^2
 + (\beta_1-\beta_2) \| \nabla \delta h^n \|_2^2 + \beta_2 \| \nabla \delta h^{n-1} \|_2^2.
 \end{align}
Matching the RHS of \eqref{arr2} with \eqref{arr3.0}, we obtain
\begin{align}
\alpha_1 = \frac 3{4\tau}, \; \alpha_2 = \frac 1 {6\tau}, \; \beta_1 = \frac 32,\;
\beta_2 =\frac 12.
\end{align}
We then deduce that the LHS of \eqref{arr3.0} must satisfy
\begin{align}
  \frac 1{3\tau} \| \delta h^{n+1} \|_2^2 + \eta^2 \frac 12
 \| \Delta \delta h^{n+1} \|_2^2 - \frac {25}{16} \| \nabla \delta h^{n+1} \|_2^2 
 \ge \frac 32 \| \nabla \delta h^{n+1} \|_2^2. 
 \end{align}
\end{rem}

\section{Uniform boundedness of energy for any $\tau>0$}

\begin{thm}[Uniform boundedness of energy for arbitrary time step]\label{thm6.1}
Consider the scheme \eqref{eq:sch2}. Assume  $h^0$, $h^1$, $h^2 \in H^2(\mathbb T^2)$
satisfies
\begin{align} \label{h1h0}
\int_{\mathbb T^2} h^2 dx =\int_{\mathbb T^2} h^1 dx = \int_{\mathbb T^2} h^0 dx,
\end{align}
and  for some constant $\alpha_2>0$
\begin{align} \label{h1h0.0}
\| \delta h^2 \|_2^2+ \| \delta h^1 \|_2^2 \le \alpha_2 \tau.
\end{align}
Then  for any $\tau>0$, it holds that
\begin{align}
\sup_{n\ge 3} (\| h^n \|_{2} + \| \Delta h^n \|_2 ) \le B_1<\infty,
\end{align}
where $B_1>0$ depends only on ($h^0$, $h^1$, $h^2$, $\eta$, $\alpha_2$).  Note that $B_1$ is independent of $\tau$.

\end{thm}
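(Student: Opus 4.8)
The plan is to dichotomize on the size of $\tau$. For $0<\tau\le\alpha_1\eta^2$ there is nothing new to do: hypothesis \eqref{h1h0.0} is identical to \eqref{furt01}, so the uniform $H^2$ bound \eqref{3.15} of Theorem \ref{thm5.1} applies verbatim. The entire difficulty is the complementary regime $\tau>\alpha_1\eta^2$, and the guiding idea is that once $\tau$ is bounded below the linear symbol becomes uniformly coercive. I would first note that \eqref{eq:sch2} conserves the mean: integrating over $\mathbb T^2$ annihilates both $\Delta^2 h^{n+1}$ and $\nabla\cdot g(\cdot)$, so the averages satisfy the homogeneous BDF3 recurrence and \eqref{h1h0} forces $\int_{\mathbb T^2}h^n\,dx$ to be independent of $n$. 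It therefore suffices to control the nonzero Fourier modes, and since $|k|\ge 1$ a bound on $\|\Delta h^n\|_2$ will also yield one on $\|h^n\|_2$.

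Passing to Fourier coefficients $u_n^{(k)}=\widehat{h^n}(k)$ for $k\ne0$ and setting $\mu_k=6\tau\eta^2|k|^4$, the scheme becomes, for each $k$, the scalar linear recurrence
\begin{equation}
(11+\mu_k)\,u_{n+1}^{(k)}-18u_n^{(k)}+9u_{n-1}^{(k)}-2u_{n-2}^{(k)}=6\tau f_n^{(k)},\qquad f_n^{(k)}=\widehat{\nabla\cdot g(w^n)}(k),
\end{equation}
with $w^n=3\nabla h^n-3\nabla h^{n-1}+\nabla h^{n-2}$. The benign nonlinearity enters decisively here: since $|g|\le\tfrac12$ (Lemma \ref{gLip0.1}), the source is bounded \emph{independently of $h$}, namely $\sum_k|\widehat{g(w^n)}(k)|^2=\|g(w^n)\|_2^2/(2\pi)^2\le\tfrac14$ for every $n$, and $|f_n^{(k)}|\le|k|\,|\widehat{g(w^n)}(k)|$. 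Hence no bootstrap is required; the recurrence may be analysed as a genuinely linear one driven by an arbitrary uniformly bounded forcing. Its homogeneous dynamics are dictated by the cubic $p_\mu(\lambda)=(11+\mu)\lambda^3-18\lambda^2+9\lambda-2$, whose roots are precisely the BDF3 amplification factors at the test value $z=-\tau\eta^2|k|^4<0$. This is exactly where the second Dahlquist barrier is sidestepped: although BDF3 is not $A$-stable, the symbol $-\eta^2|k|^4$ is real and negative, and the negative real axis lies strictly inside the stability region of BDF3. Thus all three roots lie in the open unit disk for every $\mu>0$; and because $\mu_k\ge 6\alpha_1\eta^4>0$ in the regime under study, a continuity-and-compactness argument (the roots depend continuously on $\mu$, lie in the open disk for each finite $\mu\ge\mu_0$, and tend to $0$ as $\mu\to\infty$) furnishes a uniform radius $\rho_*=\rho_*(\eta)<1$ with $\max_i|\lambda_i(\mu_k)|\le\rho_*$ for all $k$.

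I would then solve the recurrence by discrete Duhamel, $u_n^{(k)}=u_n^{\mathrm{hom},(k)}+6\tau\sum_{j}G_{n-j}(\mu_k)\,f_j^{(k)}$, where $G_m(\mu)$ is the impulse response of $p_\mu$. The homogeneous propagator is uniformly power bounded and geometrically decaying, so $|u_n^{\mathrm{hom},(k)}|\le C\rho_*^{\,n}(|u_0^{(k)}|+|u_1^{(k)}|+|u_2^{(k)}|)$ and $\sum_k|k|^4|u_n^{\mathrm{hom},(k)}|^2\lesssim\|\Delta h^0\|_2^2+\|\Delta h^1\|_2^2+\|\Delta h^2\|_2^2$, a finite transient independent of $\tau$. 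The crux is the kernel estimate
\begin{equation}
|G_m(\mu)|\le\frac{C}{\mu}\,\rho_*^{\,m}\quad(m\ge1),\qquad\text{so that}\qquad\sum_{m\ge1}|G_m(\mu)|\le\frac{C}{\mu},
\end{equation}
valid uniformly for $\mu\ge\mu_0$; the total gain $\sum_mG_m=p_\mu(1)^{-1}=1/\mu$ reflects the consistency relation $p_\mu(1)=\mu$, and it is precisely this factor $1/\mu$ that cancels the step size, since $6\tau\,\mu_k^{-1}=(\eta^2|k|^4)^{-1}$. This cancellation is the mechanism delivering a $\tau$-uniform bound.

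For the forced part I would keep the two summations coupled, which is essential. Put $a_m^{(k)}=6\tau|k|^3|G_m(\mu_k)|$, so that $a_m^{(k)}\le C\eta^{-2}|k|^{-1}\rho_*^{\,m}$ and $\sum_m a_m^{(k)}\le C\eta^{-2}|k|^{-1}$, both free of $\tau$. Then $|k|^2|u_n^{\mathrm{forced},(k)}|\le\sum_m a_m^{(k)}|\widehat{g(w^{n-m})}(k)|$, and Cauchy--Schwarz in the time lag gives $|k|^4|u_n^{\mathrm{forced},(k)}|^2\lesssim\eta^{-2}|k|^{-1}\sum_m a_m^{(k)}|\widehat{g(w^{n-m})}(k)|^2$. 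Summing over $k$, exchanging the order, and using for each fixed lag $m$ both $a_m^{(k)}|k|^{-1}\lesssim\eta^{-2}|k|^{-2}\rho_*^{\,m}\le\eta^{-2}\rho_*^{\,m}$ (as $|k|\ge1$) and the uniform Parseval bound $\sum_k|\widehat{g(w^{n-m})}(k)|^2\le\tfrac14$, one is left with a geometric series in $m$; this yields $\sup_n\sum_k|k|^4|u_n^{\mathrm{forced},(k)}|^2\lesssim\eta^{-4}(1-\rho_*)^{-2}$, independent of $\tau$. Combining the transient and forced bounds controls $\|\Delta h^n\|_2$, and mean conservation together with $\|h^n\|_2\le C\|\Delta h^n\|_2+C$ finishes the argument. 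The main obstacle is exactly the pair of uniform kernel facts — the spectral radius bound $\rho_*<1$ and the gain estimate $\sum_m|G_m(\mu)|\le C/\mu$ — together with the coupled summation: the naive route, bounding the time convolution in $\ell^\infty$ and only then summing in $k$, fails because $\sum_{k\ne0}|k|^{-2}$ diverges, so the forcing must be spent through Parseval \emph{at each fixed time lag}.
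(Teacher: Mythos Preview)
Your proposal is correct and follows the same overall strategy as the paper: dichotomize on $\tau$, invoke Theorem~\ref{thm5.1} for $\tau\le\alpha_1\eta^2$, and for $\tau>\alpha_1\eta^2$ reduce to mean-zero functions, pass to Fourier modes, and use that the BDF3 amplification matrix has spectral radius uniformly less than $1$ once the symbol $s=(11+6\tau|k|^4)^{-1}$ is bounded away from $1/11$. The paper records this last fact as Lemma~\ref{lem4.2a}, proved by precisely the continuity-plus-compactness argument you sketch, and the key forcing input $\|g(w^n)\|_2\le C$ independent of the iterate is likewise shared.

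Where you diverge is in how the $H^2$ bound is extracted from the mode-wise iteration. You argue directly in $\ell^2_k$ via the scalar Green's function, pairing the kernel estimate $|G_m(\mu)|\le C\rho_*^{\,m}/\mu$ with Cauchy--Schwarz in the time lag and an interchange of the $k$- and $m$-sums so that Parseval on $g(w^{n-m})$ can be invoked at each fixed lag. The paper instead first obtains the pointwise bound $\sup_{n,k}|\widehat{u^n}(k)|\le C_1$ from the companion-matrix Duhamel formula and Lemma~\ref{lem4.2a}, and only then reinserts this into one further step of the scheme, letting the factor $(11+6\tau|k|^4)^{-1}$ absorb the $|k|^4$ weight. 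Their two-stage route is shorter and entirely sidesteps the double summation you flag as the crux; your route makes the $\tau$-cancellation $6\tau/\mu_k=(\eta^2|k|^4)^{-1}$ fully explicit and would port more readily to settings where a uniform pointwise-in-$k$ bound on the iterates is unavailable.
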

\begin{rem}
Note that the assumption \eqref{h1h0} is quite reasonable since the mean of $h$ is preserved in time
for the PDE solution. If we compute $h^1$ and $h^2$ using BDF1/EP1 and BDF2/EP2 respectively, then it is easy
to check that \eqref{h1h0} and \eqref{h1h0.0} hold.
\end{rem}
\begin{rem}
To put things into perspective,
it is useful to recall the usual notion of $A$-stability in the classical numerical
ODE textbook (cf. pp. 348 of \cite{SM03}).  Consider 
the family of model ODEs
\begin{align}
y^{\prime} = \lambda y, \qquad \lambda\in \mathbb C, \; \mathrm{Re}(\lambda)<0.
\end{align}
A linear multistep method is absolutely
stable for a given value of $\lambda \tau$ if  each root $z=z(\lambda \tau)$
of the associated stability polynomial satisfies $|z(\lambda \tau)|<1$.  The method is called
$A$-stable if the stability region $\{\lambda\tau: \; \text{the method is absolutely stable for $\lambda \tau$}  \}$ covers the negative complex half-plane.  The notion of $A$-stability is extremely demanding, for example the well-known second Dahlquist barrier (\cite{Dah63}) asserts that:
\begin{enumerate}
\item No explicit linear multistep method is $A$-stable;
\item Implicit methods can have order of convergence at most two;
\item The trapezoidal rule has the smallest error constant $1/12$ amongst all second
order $A$-stable linear multistep methods.
\end{enumerate}
In particular the third order BDF3 method is not $A$-stable. However it was already realized
(cf. pp. 348 of \cite{SM03}) that one can relax the condition of $A$-stability by requiring
that the region of absolute stability should include a large part of the negative half-plane and 
in particular the whole negative real axis. The BDF methods are one of the most efficient
methods in this regard.  By analyzing the characteristic polynomial (cf. pp. 27 of \cite{Ari96}), 
it is known that BDF-$k$ ($k$ denotes the order) methods satisfy the root condition and 
is zero-stable if and only if $k\le 6$ (cf. \cite{c1972,cm1975,f1998}). 
\end{rem}
\begin{rem}
It is possible to reconcile 
the unconditional stability result proved in Theorem \ref{thm6.1} with the classical notion
of stability for ODEs as pointed out in the preceding remark. In the PDE setting here, we only need
the stability region to cover the negative real axis. In yet other words one only need to
demand that the method is absolute stable for the special family:
\begin{align}
y^{\prime} = \lambda y, \qquad \lambda<0.
\end{align}
Since the stability region of BDF3 method covers the negative real analysis, it is natural
to expect  stability for all $\tau>0$. Indeed for small $\tau>0$ the numerical solution is close
to the PDE solution and one should expect energy decay. For $\tau\gtrsim 1$,  the linear dissipation term $-\tau \eta^2 \Delta^2 h^{n+1}$ introduces a nontrivial shift of the stability polynomial. In particular,  all characteristics roots lie
strictly inside the unit disk which make the dynamics very stable. Since our nonlinearity is very benign which can regarded
as an $O(1)$-perturbation at each iterative step, the uniform stability easily follows.
\end{rem}
\begin{proof}
By using \eqref{h1h0} and an induction argument, we have
\begin{align}
\int_{\mathbb T^2} h^n dx = \int_{\mathbb T^2} h^0 dx, \qquad\forall\, n\ge 1.
\end{align}
Denote the average of $h^0$ as $\bar h$ and denote
\begin{align}
y^n = h^n - \bar h.
\end{align}
It is not difficult to check that $y^n$ evolves according to the same scheme \eqref{eq:sch2} where $h^n$ is replaced
by $y^n$.  Thus with no loss we can assume all $h^n$ has mean zero.
Note that we may assume $\tau>\alpha_1 \eta^2$ since the case $0<\tau \le \alpha_1 \eta^2$ is
already covered by Theorem \ref{thm5.1}. With some minor change of notation and relabelling the constants if necessary, our desired result
then follows from Theorem \ref{thm6.1a} below.
\end{proof}

Assume $f^n = (f^n_1, f^n_2)$, $n\ge 1$ is a given sequence of functions on $\mathbb T^2$.
Let $u^n$ evolve according to the scheme:
\begin{align} \label{uS0}
\frac{11u^{n+1}-18u^n+9u^{n-1}-2u^{n-2} }{6\tau} =-\Delta^2 u^{n+1}+ \nabla \cdot f^n, \qquad\, n\ge 2,
\end{align}
We have the following uniform boundedness result.
\begin{thm} \label{thm6.1a}
Consider the scheme \eqref{uS0} with $\tau\ge \tau_0>0$. Assume $u^0$, $u^1$, $u^2\in H^2(\mathbb T^2)$ and have mean zero. Suppose
\begin{align}
\sup_{n\ge 2} \| f^n \|_2 \le  A_0<\infty.
\end{align}
We have
\begin{align}
\sup_{n\ge 3} (\| u^n \|_2 + \| \Delta u^n \|_2) \le  A_1<\infty,
\end{align}
where $A_1>0$ depends only on ($\tau_0$,  $A_0$, $u^0$, $u^1$, $u^2$).
\end{thm}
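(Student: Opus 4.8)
The plan is to diagonalize the scheme in Fourier space and reduce the whole problem to a scalar, constant-coefficient three-step linear recursion for each frequency, whose only genuine difficulty is a uniform root-condition estimate. Writing $u^n=\sum_{k\neq 0}\hat u^n(k)\,e^{ik\cdot x}$ (the mean-zero hypothesis removes the mode $k=0$), the scheme \eqref{uS0} becomes, for every $k\in\mathbb Z^2\setminus\{0\}$,
\[
(11+6\tau|k|^4)\,\hat u^{n+1}(k)=18\hat u^n(k)-9\hat u^{n-1}(k)+2\hat u^{n-2}(k)+6\tau\, i k\cdot\hat f^{n}(k).
\]
Setting $a=a_k:=11+6\tau|k|^4$, the homogeneous dynamics is governed by the \emph{shifted} BDF3 characteristic polynomial $p_a(z)=a z^3-18z^2+9z-2$, and I would recast the recursion in companion form $U^{n+1}=M(a)U^n+F^n$ with $U^n=(\hat u^{n+1},\hat u^n,\hat u^{n-1})$ and forcing $F^n=\bigl(6\tau\,ik\cdot\hat f^n/a,\,0,\,0\bigr)$. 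Since $\tau\ge\tau_0$ and $|k|\ge1$, every relevant $a$ satisfies $a\ge a_0:=11+6\tau_0>11$, and as $\tau|k|^4\to\infty$ the coefficients $18/a,9/a,2/a$ all tend to $0$, so the roots collapse to the origin; the worst case is thus the smallest value $a_0$.

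The decisive step is to show that all three roots of $p_a$ lie strictly inside the unit disk for every $a>11$, with a gap uniform on $[a_0,\infty)$. I would verify this with the bilinear substitution $z=(1+w)/(1-w)$, mapping $\{|z|<1\}$ onto the left half-plane; a direct expansion gives
\[
(1-w)^3\,p_a\!\left(\tfrac{1+w}{1-w}\right)=(a+29)w^3+(3a+3)w^2+(3a-21)w+(a-11).
\]
The Routh--Hurwitz criterion for this cubic requires all coefficients positive, which holds precisely when $a>11$, together with $(3a+3)(3a-21)>(a+29)(a-11)$, i.e. $8(a^2-9a+32)>0$, valid for all real $a$ since $a^2-9a+32$ has negative discriminant. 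Hence $p_a$ is Schur-stable for every $a>11$. Continuity of the root moduli in $a$, their decay to $0$ as $a\to\infty$, and strict stability at $a=a_0$ then furnish a single $\rho_0=\rho_0(\tau_0)<1$ with $\max_i|z_i(a)|\le\rho_0$ for all $a\ge a_0$.

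The main obstacle is to upgrade this uniform spectral radius into a uniform power bound $\|M(a)^m\|\le C\rho_0^m$ valid for all $a\ge a_0$, because $M(a)$ is non-normal and its roots can nearly coalesce (near $a_0$, and near the origin as $a\to\infty$, where $M(a)$ tends to a nilpotent matrix), so a naive eigenvector expansion has constants that degenerate. I would instead argue by resolvent/Cauchy-integral: fix $\rho_0\in(\bar\rho,1)$ with $\bar\rho=\sup_{a\ge a_0}\max_i|z_i(a)|$, and bound $K:=\sup_{a\ge a_0}\sup_{|\zeta|=\rho_0}\|(\zeta I-M(a))^{-1}\|<\infty$ using continuity on the compact $a$-range together with the limit $M(a)\to M_\infty$ (nilpotent, spectrum $\{0\}$) to control the tail $a\to\infty$. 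Then $M(a)^m=\frac{1}{2\pi i}\oint_{|\zeta|=\rho_0}\zeta^m(\zeta I-M(a))^{-1}\,d\zeta$ yields $\|M(a)^m\|\le\rho_0^{m+1}K$, uniformly in $k$ and $\tau$.

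Finally I would close via discrete variation of constants. The key $\tau$-uniform gain is the forcing estimate $|F^n|\le\frac{6\tau|k|}{11+6\tau|k|^4}\,|\hat f^n(k)|\le|\hat f^n(k)|/|k|^3$, so that $|k|^2|F^n|\le|\hat f^n(k)|$. Writing $U^n$ as the propagated initial data plus $\sum_m M(a_k)^{\,n-1-m}F^m$, the homogeneous part decays like $\rho_0^{\,n}$ times the $H^2$-norm of $(u^0,u^1,u^2)$, while for the forced part Minkowski's inequality in $\ell^2_k$, combined with $|[M(a_k)^{j}]_{11}|\le C\rho_0^{\,j}$ and $\sum_{j\ge0}C\rho_0^{\,j}=C/(1-\rho_0)$, gives
\[
\|\Delta u^{n+1}\|_2\;\lesssim\;\rho_0^{\,n}\,\|(u^0,u^1,u^2)\|_{H^2}+\frac{C}{1-\rho_0}\,\sup_m\|f^m\|_2,
\]
bounded uniformly in $n$ and $\tau$ by a constant depending only on $(\tau_0,A_0,u^0,u^1,u^2)$. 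Since the data are mean zero, $\|u^n\|_2\le\|\Delta u^n\|_2$ by Poincaré, which delivers the full estimate. Only the uniform power bound of the third paragraph is delicate; the remaining steps are routine summations.
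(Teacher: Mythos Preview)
Your proof is correct and follows the same overall architecture as the paper: pass to Fourier modes, recast the three-step recursion as a companion-matrix iteration $Z^{n+1}=M(s)Z^n+F^{n+1}$ with $s=1/(11+6\tau|k|^4)$, establish a power bound $\|M(s)^m\|\le K\rho^m$ uniform in $s\in(0,s_0]$, and close by discrete Duhamel. The differences are in the technical implementation of the sublemmata. For the root location you invoke the bilinear map and Routh--Hurwitz, whereas the paper (Lemma~\ref{lem4.1}) writes down the cubic roots explicitly and tracks $\lambda_1(s)$ monotonically with the product identity $|\lambda_2|^2=2s/\lambda_1$; your route is shorter and avoids the radical formulas. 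For the uniform power bound you use a resolvent/Cauchy-integral argument on the compactified parameter range $[0,s_0]$ (exploiting the nilpotent limit at $s=0$), while the paper (Lemma~\ref{lem4.2a}) argues by hand that $\|M(s)^3\|\le\tfrac12$ for small $s$, then covers $[s_1,s_0]$ by finitely many neighborhoods on each of which some fixed power is a contraction; both yield the same conclusion, but your resolvent argument is more systematic and extends verbatim to higher-order BDF companions. Finally, you close directly in the $|k|^2$-weighted $\ell^2_k$ norm using $|k|^2|F^n(k)|\le|\hat f^n(k)|$, which gives the $H^2$ bound in one stroke; the paper instead first obtains a pointwise-in-$k$ bound $\sup_k|Z^{n+1}(k)|\le C_1$ and then reinserts it into the scheme to extract decay in $k$. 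Your organization of this last step is cleaner.
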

\begin{proof}
We first rewrite \eqref{uS0} as
\begin{align} \label{6.8t1}
u^{n+1}= 18T_1 u^n - 9T_1 u^{n-1} +2T_1 u^{n-2}+ 6\tau T_1 \nabla \cdot f^n,
\end{align}
where $T_1= (11+6\tau \Delta^2)^{-1}$. One should note that since we are working with mean-zero
functions, we can replace \eqref{6.8t1} by
\begin{align} \label{6.8t0}
u^{n+1}= 18T u^n - 9T u^{n-1} +2T u^{n-2}+ 6\tau T \nabla \cdot f^n,
\end{align}
where 
\begin{align}
\widehat{T}(k) = \frac 1 {11+6\tau |k|^4} \cdot 1_{|k|\ge 1}.
\end{align}
The operator $T$ admits a natural spectral bound, namely
\begin{align}
0<\widehat{T}(k) \le \frac 1 {11+6\tau} \le \frac 1 {11+6\tau_0},\quad
6\tau |k| |\widehat{T}(k)| \le 1, 
 \qquad\forall\, 0\ne k \in \mathbb Z^2.
\end{align}

We now denote
\begin{align}
&Z^{n+1}= (\widehat{u^{n+1}}(k), \widehat{u^n}(k), \widehat{u^{n-1}}(k) )^T, \\
&F^{n+1}=(6\tau \widehat{T}(k) (ik)\cdot \widehat{f^n}(k), \, 0, \, 0)^T, \\
&M=
\begin{pmatrix}
18\widehat{T}(k) & -9 \widehat{T}(k) & 2 \widehat{T}(k) \\
1 & 0 & 0 \\
0 & 1 & 0
\end{pmatrix}.
\end{align}
Clearly
\begin{align}
Z^{n+1} & = M Z^n + F^{n+1} \notag \\
&= M^{n-1}Z^2 + \sum_{j=3}^{n+1} M^{n+1-j} F^j, \qquad\forall\, n\ge 2.
\end{align}
Now for each fixed $k$, by Lemma \ref{lem4.2a}, we have
\begin{align}
|M^{n-1} Z^2| \le |Z^2|, \qquad | M^{n+1-j} F^j |
\le K_1 \rho_1^{n+1-j} |F^j|,
\end{align}
where $K_1>0$ depends only $\tau_0$, and $0<\rho_1<1$ depends only on $\tau_0$. 

We then obtain
\begin{align}
\sup_{n\ge 2} \sup_{0\ne k \in \mathbb Z^2} |Z^{n+1}(k)| \le C_1,
\end{align}
where $C_1$ depends only on ($u^0$, $u^1$, $u^2$, $\tau_0$, $A_0$). Using 
\eqref{6.8t0},  we get
\begin{align}
\sup_{n\ge 2} \sup_{0\ne k \in \mathbb Z^2} ||k|^4 \widehat{u^{n+1}}(k)| \le C_2,
\end{align}
where $C_2$ depends only on ($u^0$, $u^1$, $u^2$, $\tau_0$, $A_0$). 
The desired $H^2$-bound then easily follows.
\end{proof}

\begin{lem} \label{lem4.1}
Let $0<s_0<\frac 1 {11}$. For $0<s\le s_0$ the roots to the equation in $\lambda$ 
\begin{align}
\lambda^3 -18s \lambda^2+9s \lambda -2s =0
\end{align}
are given by 
\begin{align}
&\lambda_1= 6s - \frac {a}{9b} +b; \\
&\lambda_2= 6s+ \frac {1+i\sqrt{3}}{18} \cdot \frac a b - \frac {1-i\sqrt3}2 b; \\
& \lambda_3 = \overline{\lambda_2}= 
6s+ \frac {1-i\sqrt{3}}{18} \cdot \frac a b - \frac {1+i\sqrt3}2 b,
\end{align}
where
\begin{align}
& a=27s-324s^2, \\
& b= \Bigl(s-27s^2 +216s^3 +\sqrt{s^2-27s^3+189s^4} \Bigr)^{\frac 13}.
\end{align}
In particular, we have 
\begin{align}
&2.1 s < \lambda_1(s) \le \lambda_a<1,  \label{T4.21a}\\
& |\lambda_2(s)|=|\lambda_3(s)|  \le \sqrt{\frac 2 {2.1}}<1, \qquad\forall\, 0<s\le s_0,
\label{T4.21b}
\end{align}
where $\lambda_a>0$ depends only on $s_0$.
\end{lem}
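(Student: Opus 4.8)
The plan is to obtain the closed-form roots by the classical Cardano reduction and then read off the three stated estimates from Vieta's relations together with an elementary sign analysis of the cubic. Throughout write $P(\lambda)=\lambda^3-18s\lambda^2+9s\lambda-2s$; this is (up to sign) the characteristic polynomial of the companion-type matrix $M$ appearing in the proof of Theorem \ref{thm6.1a}, which is the reason these roots govern the stability of the scheme. First I would remove the quadratic term by the shift $\lambda=\mu+6s$, producing the depressed cubic $\mu^3+p\mu+q=0$ with
\[
p=9s-108s^2=9s(1-12s),\qquad q=-2s+54s^2-432s^3=-2s(1-27s+216s^2).
\]
A direct expansion then gives the Cardano discriminant
\[
\frac{q^2}{4}+\frac{p^3}{27}=s^2-27s^3+189s^4=s^2\bigl(1-27s+189s^2\bigr),
\]
and since the quadratic $1-27s+189s^2$ has negative discriminant ($27^2-4\cdot189=-27$) it is strictly positive, so this quantity is positive for every $s>0$. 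Hence the cubic has exactly one real root together with a complex-conjugate pair, matching the asserted structure $\lambda_3=\overline{\lambda_2}$. The stated formulas then follow by applying $\mu=w-\tfrac{p}{3w}$ over the three cube roots $w\in\{b,\omega b,\omega^2 b\}$, where $\omega=\tfrac{-1+i\sqrt3}{2}$ and $b=\bigl(-q/2+\sqrt{q^2/4+p^3/27}\bigr)^{1/3}$; after substituting $a=3p=27s-324s^2$ and undoing the shift, one recovers exactly the displayed $\lambda_1,\lambda_2,\lambda_3$.

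For the real root I would argue purely by sign changes of $P$. A short computation gives
\[
P(2.1\,s)=s\bigl(-70.119\,s^2+18.9\,s-2\bigr),
\]
and the bracketed quadratic has negative leading coefficient and negative discriminant ($18.9^2-4\cdot70.119\cdot2<0$), hence is negative for all $s$; therefore $P(2.1\,s)<0$. On the other hand $P(1)=1-11s>0$ for $s<\tfrac1{11}$. Since $P$ has a single real root, the sign change between $2.1\,s$ and $1$ pins it there, giving $2.1\,s<\lambda_1<1$. To upgrade the upper bound to a uniform $\lambda_1\le\lambda_a<1$ depending only on $s_0$, I would show $\lambda_1$ is increasing in $s$: implicit differentiation of $P(\lambda_1(s),s)=0$ yields $\lambda_1'(s)=-\partial_sP/\partial_\lambda P$, where $\partial_\lambda P=P'(\lambda_1)>0$ because $\lambda_1$ is a simple real root of a cubic with positive leading coefficient, while $\partial_sP=-18\lambda_1^2+9\lambda_1-2<0$ (again a negative-discriminant quadratic). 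Thus $\lambda_1'(s)>0$, and one may take $\lambda_a=\lambda_1(s_0)<1$.

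The complex-conjugate bound is then immediate and is, to my mind, the cleanest part: by Vieta $\lambda_1\lambda_2\lambda_3=2s$, and since $\lambda_3=\overline{\lambda_2}$ we have $\lambda_2\lambda_3=|\lambda_2|^2$, whence
\[
|\lambda_2|^2=|\lambda_3|^2=\frac{2s}{\lambda_1}<\frac{2s}{2.1\,s}=\frac{2}{2.1},
\]
so $|\lambda_2|=|\lambda_3|<\sqrt{2/2.1}<1$. Thus the lower bound on $\lambda_1$ is exactly what controls the modulus of the complex pair, which explains the otherwise mysterious constant $2.1$: it is chosen so that $2/2.1<1$ while still keeping $P(2.1s)<0$. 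I expect the main obstacle to be the bookkeeping in the Cardano step — matching the cube-root-of-unity factors and the constants $a,b$ to the precise expressions in the statement — rather than the estimates themselves; the bounds reduce, via Vieta together with two negative-discriminant quadratics, to essentially one-line sign checks, and the only point genuinely requiring care is obtaining the \emph{uniform} (rather than pointwise) upper bound $\lambda_a<1$, which the monotonicity argument above resolves.
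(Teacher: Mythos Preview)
Your argument is correct and follows essentially the same route as the paper: the explicit roots come from Cardano, the upper bound $\lambda_1\le\lambda_a<1$ comes from monotonicity of $\lambda_1(s)$ together with $\lambda_1(\tfrac1{11})=1$, and the bound on the complex pair comes from Vieta via $|\lambda_2|^2=2s/\lambda_1$. The one genuine difference is the lower bound $\lambda_1>2.1s$: the paper asserts (without details) that $\lambda_1(s)-2.1s$ is itself monotonically increasing in $s$, which requires the stronger estimate $\lambda_1'(s)>2.1$, whereas you obtain the bound directly from the sign change $P(2.1s)<0<P(1)$ using the negative-discriminant quadratic $-70.119s^2+18.9s-2$. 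Your route is cleaner and avoids proving the sharper derivative bound; you also supply the discriminant check $q^2/4+p^3/27=s^2(1-27s+189s^2)>0$ establishing the one-real/two-complex structure, which the paper takes for granted.
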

\begin{proof}
Since the equation is cubic we have the explicit formula for the roots. It is not difficult to check
that $\lambda_1(s)$ is monotonically increasing in $s$ and $\lambda_1(\frac 1{11})=1$ with
$\lambda_1^{\prime}>0$ for $0<s\le \frac 1{11}$.  The function $\lambda_1(s)-2.1s$ is
also monotonically increasing. Thus \eqref{T4.21a} holds.  The bound \eqref{T4.21b}
follows from the fact that 
\begin{align}
|\lambda_2 \lambda_3|=|\lambda_2|^2 = \frac {2s}{\lambda_1(s)} <\frac 2 {2.1}.
\end{align}
\end{proof}

\begin{lem} \label{lem4.2a}
Let $0<s_0<\frac 1{11}$. Consider the matrix
\begin{align}
M(s) = \begin{pmatrix}
18s   &-9s  & 2s \\
1 &0  &0\\
0 &1 &0
\end{pmatrix},
\end{align}
where $0<s\le s_0$. There exists an integer $n_0\ge 1$ which depends only on $s_0$, such that
\begin{align} \label{3.30a}
\sup_{0<s\le s_0}\sup_{x \in \mathbb R^3, \, |x|=1} | M(s)^{n_0} x| \le \epsilon_0<1,
\end{align}
where $\epsilon_0>0$ depends only on $s_0$. In the above $|x| =\sqrt{x_1^2+x_2^2+x_3^2}$
denotes the usual $l^2$-norm on $\mathbb R^3$.

It follows that 
\begin{align} \label{3.31a}
\sup_{0<s\le s_0}\sup_{x \in \mathbb R^3, \, |x|=1} | M(s)^{n} x| \le K_1 \cdot \rho_1^n,
\qquad\forall\, n\ge 0,
\end{align}
where $0<\rho_1<1$, $K_1>0$ depend only on $s_0$. 
\end{lem}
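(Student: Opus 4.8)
The plan is to exploit the fact that $M(s)$ is precisely the companion matrix of the cubic in Lemma \ref{lem4.1}, so that its eigenvalues are exactly $\lambda_1(s),\lambda_2(s),\lambda_3(s)$, and by \eqref{T4.21a}--\eqref{T4.21b} each has modulus strictly below $1$, uniformly in $s$. Writing $\|A\|:=\sup_{|x|=1}|Ax|$ for the operator norm induced by the Euclidean norm $|\cdot|$ appearing in the lemma, the goal \eqref{3.30a} is just $\sup_{0<s\le s_0}\|M(s)^{n_0}\|\le\epsilon_0<1$. Since eigenvalues depend continuously on the (polynomial) entries of $M(s)$, the spectral radius $s\mapsto\rho(M(s))$ is continuous on the \emph{compact} interval $[0,s_0]$; here I deliberately enlarge the range to include the endpoint $s=0$, where $M(0)$ is nilpotent with $M(0)^3=0$ and hence $\rho(M(0))=0$. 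Consequently $\rho_*:=\max_{s\in[0,s_0]}\rho(M(s))<1$, and I fix once and for all some $\rho_1\in(\rho_*,1)$ depending only on $s_0$. The difficulty — and the whole point of the lemma — is that a uniform spectral radius bound does \emph{not} by itself give a uniform decay rate for $\|M(s)^n\|$, because the transformations diagonalizing $M(s)$ degenerate whenever two roots coalesce (the discriminant in Lemma \ref{lem4.1} vanishes for some $s$) and as $s\to0$; an explicit formula for $M(s)^n$ through the roots would therefore carry constants that blow up. I avoid this entirely by a compactness argument.

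For \eqref{3.30a}, fix any $s\in[0,s_0]$. By Gelfand's formula $\|M(s)^n\|^{1/n}\to\rho(M(s))\le\rho_*<\rho_1$, so there is an exponent $m(s)$ with $\|M(s)^{m(s)}\|<\rho_1^{m(s)}$. As $s'\mapsto\|M(s')^{m(s)}\|$ is continuous for this fixed exponent, the strict inequality persists on a relatively open neighborhood $U_s\subset[0,s_0]$ of $s$. The family $\{U_s\}$ covers the compact set $[0,s_0]$, so I extract a finite subcover $U_{s_1},\dots,U_{s_p}$ with associated exponents $m_1,\dots,m_p$ and set $n_0=\mathrm{lcm}(m_1,\dots,m_p)$, an integer depending only on $s_0$. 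For any $s'\in[0,s_0]$ we have $s'\in U_{s_j}$ for some $j$, and writing $n_0=q_j m_j$,
\[
\|M(s')^{n_0}\|=\bigl\|\bigl(M(s')^{m_j}\bigr)^{q_j}\bigr\|\le\|M(s')^{m_j}\|^{q_j}<\rho_1^{m_j q_j}=\rho_1^{n_0}.
\]
Hence \eqref{3.30a} holds with $\epsilon_0=\rho_1^{n_0}<1$, depending only on $s_0$.

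Finally, \eqref{3.31a} follows by a routine bootstrap. Because the entries of $M(s)$ are bounded on $[0,s_0]$, the quantity $M_\infty:=\max\bigl(1,\sup_{s\in[0,s_0]}\|M(s)\|\bigr)$ is finite and depends only on $s_0$, so $\|M(s)^r\|\le M_\infty^{\,n_0}$ for all $0\le r<n_0$. Writing any $n\ge0$ as $n=qn_0+r$ with $0\le r<n_0$, submultiplicativity of the operator norm together with \eqref{3.30a} gives $\|M(s)^n\|\le\epsilon_0^{\,q}M_\infty^{\,n_0}$; substituting $q=(n-r)/n_0$ and absorbing the bounded factor $\epsilon_0^{-r/n_0}\le\epsilon_0^{-(n_0-1)/n_0}$ yields $\|M(s)^n\|\le K_1\rho_1^{\,n}$ with $\rho_1=\epsilon_0^{1/n_0}<1$ and $K_1=M_\infty^{\,n_0}\epsilon_0^{-(n_0-1)/n_0}$, both depending only on $s_0$. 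The main obstacle throughout is the non-uniformity of the decay constant across $s$; the compactness step over the closed interval $[0,s_0]$ — made possible by continuity in $s$ and the nilpotency at $s=0$ — is exactly what converts the pointwise spectral information of Lemma \ref{lem4.1} into the uniform exponential estimate needed in the proof of Theorem \ref{thm6.1a}.
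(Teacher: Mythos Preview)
Your proof is correct and follows essentially the same compactness-plus-covering strategy as the paper: obtain a pointwise contraction of some power of $M(s)$ from the spectral information of Lemma~\ref{lem4.1}, propagate it to a neighborhood by continuity, and patch together finitely many such neighborhoods using submultiplicativity. The one cosmetic difference is that the paper treats the small-$s$ regime by computing $M(s)^3$ explicitly and then runs the covering argument on $[s_1,s_0]$, whereas you absorb that regime by extending to the closed interval $[0,s_0]$ and using the nilpotency $M(0)^3=0$; your use of the $\mathrm{lcm}$ to merge the local exponents is also a bit more explicit than the paper's appeal to submultiplicativity.
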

\begin{rem}
The constraint $s_0<\frac 1{11}$ is absolutely necessary. If $s=\frac 1{11}$, then $M(s)x=x$
for $x=(1, 1,1)^T$.
\end{rem}
\begin{proof}
First we have
\begin{align}
&M(s)^2= \begin{pmatrix}
-9s+324s^2 &2s-162s^2 & 36s^2 \\
18s & -9 s & 2s\\
1 & 0 & 0
\end{pmatrix}, \\
&M(s)^3=s
\begin{pmatrix}
2(1-162s+2916s^2) & 9(13-324s)s & 18s(-1+36s) \\
9(-1+36s) & 2(1-81s) &36s\\
18 & -9 &2
\end{pmatrix}.
\end{align}
Clearly if  $s_1$ is sufficiently small, then we have
\begin{align}
\sup_{0<s\le s_1}\sup_{x \in \mathbb R^3, \, |x|=1} | M(s)^{3} x| \le \frac 12.
\end{align}
We now focus on the regime $s_1\le s\le s_0<\frac  1{11}$.  Consider a fixed $s_*\in [s_1, s_0]$.
By Lemma \ref{lem4.1},  there exists $n_*$ depending on $s_*$ such that
\begin{align}
\sup_{x \in \mathbb R^3, \, |x|=1} | M(s_*)^{n_*} x| \le \epsilon_*<1,
\end{align}
where $\epsilon_*$ also depends on $s_*$. Perturbing around $s_*$, we can find a small neighborhood $J_*$ around $s_*$, such that 
\begin{align}
\sup_{x \in \mathbb R^3, \, |x|=1} | M(s)^{n_*} x| \le \epsilon_1<1,
\qquad\forall\, s \in J_*, 
\end{align}
where $\epsilon_1$ depends only on $s_*$.  The inequality
\eqref{3.30a}  then follows from
a covering argument and the fact that the matrix spectral norm is sub-multiplicative.
The inequality \eqref{3.31a} is a trivial consequence of \eqref{3.30a}.
\end{proof}

\section{Error analysis}
In this section we carry out the error analysis for the BDF3/EP3 scheme. We introduce a new
framework which can be generalized to many other settings especially for higher order methods.

To simplify the presentation, we assume the initial data $h^0 \in H^{m_0}(\mathbb T^2)$, $m_0\ge 20$ and has
mean zero.  The high regularity is mainly needed in the consistency estimate so as to justify
that the PDE solution satisfies the BDF3/EP3 to high precision (see \eqref{consis1}).
The regularity assumption can certainly be lowered but we shall not dwell on this issue 
here. We denote $h$ as the exact PDE solution to the system \eqref{1.1} which clearly
has mean zero and uniform $H^{m_0}$ upper bound for all $t\ge 0$.  To simplify the analysis,
we also assume that 
\begin{align} \label{h1exact}
h^1(x) = h(\tau,x), \qquad h^2(x) = h(2\tau, x), \quad\forall\, x\in \mathbb T^2.
\end{align}
In yet other words, we assume the first two numerical iterates (needed to start the
third order scheme) are computed flawlessly.  This will help to elucidate how errors
are genuinely propagated by the third order scheme whilst all other factors are suppressed.
With some additional minor work we can certainly drop the assumption \eqref{h1exact}
and replaced by some consistency estimates for $h^1-h(\tau)$ and $h^2-h(2\tau)$.
However we shall not pursue this matter here in order to simplify the presentation.

\begin{thm}[Error analysis]\label{thmErr}
Consider the scheme \eqref{eq:sch2}. Assume  $h^0$, $h^1$, $h^2 \in H^{m_0}(\mathbb T^2)$,
$m_0\ge 20$ and 
satisfy \eqref{h1exact}.  Let $T>0$ be given.
 For $\tau>0$ sufficiently small, we have
\begin{align}
\sup_{3\le n \le \frac T {\tau}} \| h^n (\cdot)- h(n\tau,\cdot) \|_2 \le C \cdot \tau^3, 
\end{align}
where $C>0$ is independent of $\tau$.
\end{thm}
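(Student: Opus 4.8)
The plan is to run a consistency-plus-stability argument, where the stability comes from an energy estimate adapted from Theorem \ref{thm5.1} together with the benign structure of $g$ recorded in Lemma \ref{gLip0.1}. Reducing to the mean-zero case exactly as in the proof of Theorem \ref{thm6.1}, set $e^n = h^n - h(n\tau)$. By the exact-start assumption \eqref{h1exact} we have $e^0=e^1=e^2=0$, so no starting error has to be propagated and the whole error is generated by the third-order scheme itself.

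First I would establish consistency at the node $t_{n+1}=(n+1)\tau$. Plugging the exact solution into \eqref{eq:sch2} and Taylor expanding, the BDF3 quotient reproduces $\partial_t h(t_{n+1})$ with leading defect $-\tfrac14\tau^3\partial_t^4 h(t_{n+1})$, while the EP3 combination $3\nabla h(t_n)-3\nabla h(t_{n-1})+\nabla h(t_{n-2})$ reproduces $\nabla h(t_{n+1})$ with defect $-\tau^3\partial_t^3\nabla h(t_{n+1})$. Since $g$ has bounded derivatives of all orders (Lemma \ref{gLip0.1}) and $h$ enjoys uniform $H^{m_0}$ bounds with $m_0\ge 20$, the exact solution satisfies the scheme up to a remainder $R^{n+1}$ with $\|R^{n+1}\|_2\lesssim \tau^3$. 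Subtracting, $e^n$ solves a perturbed BDF3/EP3 equation driven by $R^{n+1}$ and by the nonlinear increment $g(\xi^n)-g(\zeta^n)$, where $\xi^n$ and $\zeta^n$ are the numerical and exact EP3 gradients. Writing $g(\xi^n)-g(\zeta^n)=A^n(\xi^n-\zeta^n)$ with $A^n=\int_0^1 (Dg)(\zeta^n+\theta(\xi^n-\zeta^n))\,d\theta$, Lemma \ref{gLip0.1} gives $\|A^n\|_{\mathrm{op}}\le 1$, and an elementary rearrangement yields $\xi^n-\zeta^n=\nabla e^n+2\nabla \delta e^n-\nabla \delta e^{n-1}$.

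For stability I would test the error equation in $L^2$ with $e^{n+1}$ rather than with $\delta e^{n+1}$: this converts the troublesome undifferenced term $\nabla e^n$ into $\nabla e^{n+1}$ plus a difference, and, crucially, after multiplying the whole identity by $6\tau$ the leftover lower-order contributions all carry a factor $\tau$, which is what lets a discrete Gronwall argument close. The implicit term supplies the dissipation $\eta^2\|\Delta e^{n+1}\|_2^2$. Bounding the nonlinear inner product by $\|A^n\|_{\mathrm{op}}\bigl(\|\nabla e^n\|_2+2\|\nabla \delta e^n\|_2+\|\nabla \delta e^{n-1}\|_2\bigr)\|\nabla e^{n+1}\|_2$ and using the interpolation $\|\nabla v\|_2^2\le \epsilon\|\Delta v\|_2^2+C_\epsilon\|v\|_2^2$ together with Young's inequality, I would absorb all the $\epsilon\|\Delta e^{j}\|_2^2$ pieces into the \emph{accumulated} dissipation (choosing $\epsilon<\eta^2$ and summing over $n$, so that $\sum_n 6\tau\eta^2\|\Delta e^{n+1}\|_2^2$ dominates $\sum_n 6\tau\epsilon\|\Delta e^{j}\|_2^2$ across all time levels), leaving only $\tau$-weighted $\|e\|_2^2$ terms and the truncation contribution $\tau\|R^{n+1}\|_2^2\lesssim\tau^7$.

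The main obstacle is the coercivity of the BDF3 time-difference form $(11e^{n+1}-18e^n+9e^{n-1}-2e^{n-2},\,e^{n+1})$: because BDF3 is not $A$-stable it is not $G$-stable, so no single positive quadratic telescopes it directly. I would resolve this by the multiplier device, testing instead against $e^{n+1}-\mu e^n$ with the Nevanlinna--Odeh multiplier for BDF3, or, equivalently, by the modified-energy bookkeeping already used in Theorem \ref{thm5.1}, which yields a positive quadratic $\mathcal Q_n$ (a combination of $\|e^n\|_2^2,\|e^{n-1}\|_2^2,\|e^{n-2}\|_2^2$) with $(11e^{n+1}-18e^n+9e^{n-1}-2e^{n-2},\,e^{n+1})\ge \mathcal Q_{n+1}-\mathcal Q_n$ up to terms absorbable by the dissipation; here the restriction to small $\tau$ (the regime of Theorem \ref{thm5.1}) is convenient, as it guarantees that the dissipation dominates the anti-diffusive part of the nonlinearity encoded in Lemma \ref{gLip0.1}. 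Once this coercivity is in hand, summing the per-step inequality over $3\le n\le T/\tau$, using $e^0=e^1=e^2=0$ and discrete Gronwall gives $\mathcal Q_n\lesssim \tau^6$ uniformly on $[0,T]$, whence $\sup_{3\le n\le T/\tau}\|e^n\|_2\lesssim\tau^3$, as claimed. The a priori $H^2$ bounds of Theorem \ref{thm6.1}, together with the uniform regularity of $h$, ensure that every quantity above is finite and that the linearization $A^n$ is well defined.
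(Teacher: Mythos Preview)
Your approach is correct in outline but follows a genuinely different route from the paper. The paper does \emph{not} use an energy/multiplier argument at all: after establishing consistency it works entirely in Fourier space, writing the error recursion as $\Phi^{n+1}(k)=M_1(k)\Phi^n(k)+\text{source}$ with the $3\times 3$ companion matrix $M_1(k)$ of the BDF3 characteristic polynomial shifted by $6\tau|k|^4$, and then splits into high modes $\tau|k|^4\ge\epsilon_0$ (where Lemma~\ref{lem4.2a} gives a uniform contraction $\|M_1(k)^n\|\le K_1\rho_1^n$, so a summable geometric series disposes of the source) and low modes $\tau|k|^4<\epsilon_0$ (where Lemma~\ref{lem6:08a} smoothly diagonalizes $M_1(k)$ and an $l^2_k$ energy in the diagonalized coordinates plus Gronwall closes). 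Your Nevanlinna--Odeh multiplier plus interpolation plus discrete Gronwall is the classical alternative; it is shorter and uses off-the-shelf tools, while the paper's spectral machinery is the ``novel framework'' it advertises, trading simplicity for a mode-by-mode picture that ports mechanically to higher-order BDF.

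One caveat: your aside that the coercivity can be obtained ``equivalently, by the modified-energy bookkeeping already used in Theorem~\ref{thm5.1}'' is not accurate. That theorem tests with $\delta h^{n+1}$ and controls successive differences, which is a different structure and does not telescope $(11e^{n+1}-18e^n+9e^{n-1}-2e^{n-2},\,e^{n+1})$ into a nonnegative quadratic; you genuinely need the multiplier $e^{n+1}-\mu e^n$ (with the BDF3 value $\mu\approx 0.0836$) for that step to go through. With that correction your plan is sound.
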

\begin{proof}
Throughout this proof we denote by $C_i$ various constants which may depend on
($h^0$, $\eta$, $T$, $m_0$) but do not depend on $\tau$ or $n$.  To ease the notation we shall assume the diffusion coefficient
\begin{align}
\eta =1.
\end{align}
Denote
\begin{align}
\eta^n(x) = h^n(x) - h(n\tau, x), \qquad n\ge 0, \, x\in \mathbb T^2.
\end{align}

Step 1. Uniform $H^{m_0}$ bound. By using Theorem \ref{thm6.1} and a bootstrapping argument,
we have
\begin{align} \label{4.5a}
\sup_{n\ge 3}  \| h^n \|_{H^{m_0}} +\sup_{t\ge 0} \|h(t) \|_{H^{m_0}} \le C_1<\infty. 
\end{align}

Step 2. Consistency. By a simple consistency analysis, we have
\begin{align}
&\frac{11h((n+1)\tau) -18h(n\tau)+9h(({n-1})\tau)-2h(({n-2})\tau) }{6\tau}\notag \\
=&\; -\Delta^2 h((n+1)\tau)+\nabla \cdot \biggl(g\Bigl(3\nabla h(n\tau)-3\nabla h(({n-1})\tau) +\nabla h(({n-2})\tau) \Bigr)\biggr) +e^{n+1}, \qquad n\ge 2,
\end{align}
where (here we need to employ the $H^{m_0}$ regularity estimate)
\begin{align} \label{consis1}
\| e^{n+1} \|_2 \le C_2 \tau^3.
\end{align}
Taking the difference with the corresponding equation for $h^{n+1}$, we obtain
\begin{align} \label{ErN10}
&\frac{11\eta^{n+1}-18\eta^n+9\eta^{n-1}-2\eta^{n-2} }{6\tau} \notag \\
=&-\Delta^2 \eta^{n+1}+\nabla \cdot \Bigl(\alpha_{n,1}\nabla \eta^n+\alpha_{n,2} \nabla \eta^{n-1} +\alpha_{n,3}\nabla \eta^{n-2} \Bigr) +e^{n+1}, \qquad n\ge 2,
\end{align}
where
\begin{align} \label{4.9a}
\sum_{j=1}^3 \| \alpha_{n,j}\|_{H^{m_0-1} }\le C_3.
\end{align}

Step 3. Reformulation.  Since we are working with mean-zero
functions, we can replace \eqref{ErN10} by
\begin{align} 
\eta^{n+1}&= 18T \eta^n - 9T \eta^{n-1} +2T \eta^{n-2}+ 6\tau T e^{n+1} \notag \\
&\quad +6\tau T\nabla \cdot \Bigl(\alpha_{n,1}\nabla \eta^n+\alpha_{n,2} \nabla \eta^{n-1} +\alpha_{n,3}\nabla \eta^{n-2} \Bigr),
\end{align}
where 
\begin{align}
\widehat{T}(k) = \frac 1 {11+6\tau |k|^4} \cdot 1_{|k|\ge 1}.
\end{align}
The operator $T$ admits a natural spectral bound, namely
\begin{align}
0<\widehat{T}(k) \le \frac 1 {11+6\tau}, 
 \qquad\forall\, 0\ne k \in \mathbb Z^2.
\end{align}
Since we shall be working with $L^2$ norm of $\eta^n$ which carries
no derivatives, we rewrite \eqref{6.8t0} as
\begin{align}
\eta^{n+1}&= 18T \eta^n - 9T \eta^{n-1} +2T \eta^{n-2}+ 6\tau T e^{n+1} \notag \\
&\quad +6\tau T\Delta  \Bigl(\alpha_{n,1} \eta^n+\alpha_{n,2} \eta^{n-1} +\alpha_{n,3} \eta^{n-2} \Bigr) \notag \\
& \quad -6\tau T\nabla \cdot \Bigl( \nabla \alpha_{n,1} \eta^n+\nabla \alpha_{n,2}  \eta^{n-1} +\nabla \alpha_{n,3} \eta^{n-2} \Bigr).
\end{align}
We now denote
\begin{align}
&\Phi^{n+1}(k)= (\widehat{\eta^{n+1}}(k), \widehat{\eta^n}(k), \widehat{\eta^{n-1}}(k) )^T, \\
&F^{n+1}(k)=(6\tau \widehat{T}(k) \widehat{e^{n+1}}(k), \, 0, \, 0)^T, \\
&M_1(k)=
\begin{pmatrix}
18\widehat{T}(k) & -9 \widehat{T}(k) & 2 \widehat{T}(k) \\
1 & 0 & 0 \\
0 & 1 & 0
\end{pmatrix}, \\
& G^{n+1}(k)= (-6\tau \widehat{T}(k) |k|^2 \widehat{g_{n+1}}(k), 0, 0)^T, \\
& Z^{n+1}(k) =(-6\tau \widehat{T}(k) ik \cdot \widehat{z_{n+1}}(k), 0, 0)^T,
\end{align}
where
\begin{align}
&g_{n+1} = \alpha_{n,1} \eta^n+\alpha_{n,2} \eta^{n-1} +\alpha_{n,3} \eta^{n-2}, \\
&z_{n+1}=\nabla \alpha_{n,1} \eta^n+\nabla \alpha_{n,2}  \eta^{n-1} +\nabla \alpha_{n,3} \eta^{n-2}.
\end{align}
Clearly
\begin{align}
\Phi^{n+1} & = M_1 \Phi^n + F^{n+1} +G^{n+1} +Z^{n+1}. 
\end{align}

Step 4. Analysis. Let $\epsilon_0>0$ be a small constant. The needed smallness will be specified
later. We discuss two cases.

Case 1: $\tau |k|^4 \ge \epsilon_0$. More precisely we first 
estimate $\Phi^{n+1}(k)$ for $\tau |k|^4 \ge \epsilon_0$.  Denote
\begin{align}
{\Phi_H^{n}}(k) =\Phi^n(k) 1_{|k|\ge (\tau^{-1}\epsilon_0)^{\frac 14}}.
\end{align}
By using \eqref{4.5a} and \eqref{4.9a}, it is not difficult to check that 
\begin{align}
\sup_{|k|\ge (\tau^{-1}\epsilon_0)^{\frac 14}}
|k|^2( |G^{n+1}(k)|+ |Z^{n+1}(k)|) \le \beta_1 \tau^4,
\end{align}
where $\beta_1>0$ depends on $\epsilon_0$.

Then
\begin{align}
\Phi_H^{n+1}(k) = M_1(k)\Phi_H^{n}(k) + F_H^{n+1}(k),
\end{align}
where 
\begin{align}
 \| F_H^{n+1}(k) \|_{l^2_k(0\ne k \in \mathbb Z^2)} \le  (C_4 +C_5\beta_1) \tau^4.
\end{align}
Iterating in $n$, we obtain
\begin{align}
\Phi_H^{n+1}(k)= M_1(k)^{n-1}\Phi_H^2(k) + \sum_{j=3}^{n+1} M_1(k)^{n+1-j} 
F_H^j(k), \qquad\forall\, n\ge 2.
\end{align}
Thanks to the cut-off $\tau |k|^4 \ge \epsilon_0$, we can apply Lemma 
\ref{lem4.2a} to get for each $k$, 
\begin{align}
|\Phi_H^{n+1}(k)| \le K_1 \rho_1^{n-1} |\Phi_H^2(k)|
+\sum_{j=3}^{n+1} K_1 \rho_1^{n+1-j} |F_H^j(k)|, \qquad\forall\, n\ge 2,
\end{align}
where $0<\rho_1<1$, $K_1>0$ depend on $\epsilon_0$.  By \eqref{h1exact} we have
$\Phi_H^2\equiv 0$. It follows that 
\begin{align} \label{4.28a}
\sup_{n\ge 3} \| \Phi_H^n (k) \|_{l_2^k (0\ne k\mathbb Z^2)}
\le \beta_2 \tau^4,
\end{align}
where $\beta_2>0$ depend on $\epsilon_0$.

Case 2: $\tau |k|^4 <\epsilon_0$. We need to  
estimate $\Phi^{n+1}(k)$ for $\tau |k|^4 < \epsilon_0$. Denote
\begin{align}
{\Phi_L^{n}}(k) =\Phi^n(k) 1_{|k|< (\tau^{-1}\epsilon_0)^{\frac 14}}.
\end{align}
By Lemma \ref{lem6:08a}, we write
\begin{align}
M_1(k) = N(s_{\tau,k})^{-1} \Lambda(s_{\tau,k}) N(s_{\tau, k} ),
\end{align}
where 
\begin{align}
s_{\tau,k} = \frac 1 {11+6\tau |k|^4}.
\end{align}
Note that since $|k|\ge 1$, we have $ \tau \le \tau |k|^4 <\epsilon_0$. We shall
take $\epsilon_0$ sufficiently small such that Lemma \ref{lem6:08a} can be applied.
Note that $\epsilon_0$ is an absolute constant.

Denote 
\begin{align}
Y^n(k) = N(s_{\tau, k}) \Phi^n_L(k).
\end{align}
We have
\begin{align} \label{4.33a}
Y^{n+1} (k ) = \Lambda(s_{\tau, k} ) Y^n(k) + F_L^{n+1}(k),
\end{align}
where 
\begin{align}
F_L^{n+1}(k) =N(s_{\tau,k}) (F^{n+1}(k) +G^{n+1}(k) +Z^{n+1}(k))
\cdot 1_{|k|< (\tau^{-1}\epsilon_0)^{\frac 14} }.
\end{align}
Taking the dot product  with $\overline{Y^{n+1}(k)}$ (the complex conjugate
of $Y^{n+1}(k)$) on both sides of  \eqref{4.33a}, summing in $k$ and applying the Cauchy-Schwartz inequality, we obtain
\begin{align} \label{4.35a}
|Y^{n+1}(k)|_{l^2_k}^2
& \le \frac 12 |Y^n(k) |_{l^2_k}^2+
\frac 12 |\Lambda(s_{\tau,k} ) { Y^{n+1}(k)}|_{l^2_k}^2
+B_6\tau^7  \notag \\
& \qquad + B_7 \tau |Y^n(k)|_{l^2_k}^2+\epsilon_1 \tau \left|  { |k|^2}  Y^{n+1}(k)
\right|_{l^2_k}^2,
\end{align}
where $\epsilon_1$ will be taken sufficiently small, and $B_6$, $B_7>0$ depend
on $\epsilon_1$.  Note that to obtain \eqref{4.35a},
we have used the estimate of $\Phi_H^n(k)$ (see \eqref{4.28a}) and also Lemma
\ref{lem6:08a} to bound $N(s_{\tau,k})$.  Also in bounding the term containing $F^{n+1}(k)$,
we used
\begin{align}
  | (N(s_{\tau, k} ) F^{n+1}(k)) \cdot \overline{Y^{n+1}}(k) |_{l_k^1(k \ne 0)}
 &\le  \; \tilde C_1 \tau \|e^{n+1} \|_2 |Y^{n+1}(k) |_{l_k^2(k \ne 0)} \notag \\
 & \le  \; \tau ( \frac {\tilde C_1^2} {\epsilon} \| e^{n+1} \|_2^2
 + \epsilon | Y^{n+1}(k) |_{l_k^2(k\ne 0)}^2) \notag \\
 & \le \; \frac {\tilde C_2} {\epsilon} \tau^7 + \epsilon \tau ||k|^2 {Y^{n+1} }(k)
  |_{l_k^2}^2,
 \end{align}
where $\tilde C_1$, $\tilde C_2$ are constants, and $\epsilon>0$ was chosen sufficiently small.

 By Lemma \ref{lem6:08a} and taking
$\epsilon_1$ to be a sufficiently small absolute constant, we have
\begin{align}
\frac 12 |\Lambda(s_{\tau,k} ) { Y^{n+1}(k)}|_{l^2_k}^2
+\epsilon_1 \tau \left| { |k|^2}  Y^{n+1}(k)
\right|_{l^2_k}^2 \le \frac 12  |Y^{n+1}(k) |_{l_k^2}^2.
\end{align}
It follows that
\begin{align}
|Y^{n+1}(k)|_{l^2_k}^2 \le (1+C_7 \tau) |Y^n(k) |_{l^2_k}^2
+C_8 \tau^7, \quad n\ge 2.
\end{align}
Iterating in $n$ up to $n\le T/\tau$ and noting that $Y^2(k)\equiv 0$, we obtain
\begin{align}
\sup_{3\le n \le T/\tau} |Y^{n+1}(k)_{l^2_k}^2 \le C \cdot \tau^6.
\end{align}
The desired estimate then follows.
\end{proof}

\begin{lem}[Smooth diagonalization of the operator matrix] \label{lem6:08a}
 Consider the matrix
\begin{align}
M(s) = \begin{pmatrix}
18s   &-9s  & 2s \\
1 &0  &0\\
0 &1 &0
\end{pmatrix}.
\end{align}
There exists an absolute constant $\kappa_0>0$ sufficiently small such that if 
$s= \frac 1 {11} (1- \kappa)$ with $0<\kappa \le \kappa_0$, then 
$M(s)$ admits the following diagonalization:
\begin{align}
M(s) = N(s)^{-1} \Lambda(s) N(s),
\end{align}
where $\Lambda(s)=\mathrm{diag}(\lambda_1(s), \lambda_2(s), \lambda_3(s) )$,
and for some absolute constants $B_1>0$, $B_2>0$, 
\begin{align}
& \max\{ |\lambda_1(s)|, \, |\lambda_2(s)|, |\lambda_3(s) |\} \le 1-B_1 \kappa; \\
& \sup_{0<\kappa\le\kappa_0} 
\sup_{x\in \mathbb R^3: |x|=1} (|N(s)^{-1} x| +|N(s) x| ) \le B_2.
\end{align}
\end{lem}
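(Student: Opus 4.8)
The plan is to exploit the fact that $M(s)$ is the transpose of the companion matrix of the cubic $p_s(\lambda)=\lambda^3-18s\lambda^2+9s\lambda-2s$, whose roots are precisely the $\lambda_1(s),\lambda_2(s),\lambda_3(s)$ furnished by Lemma \ref{lem4.1}. First I would compute the eigenvectors explicitly: solving $M(s)v=\lambda v$ forces $v=(\lambda^2,\lambda,1)^T$, so the Vandermonde matrix $V(s)$ whose columns are $(\lambda_i^2,\lambda_i,1)^T$ satisfies $M(s)V(s)=V(s)\Lambda(s)$ with $\Lambda(s)=\mathrm{diag}(\lambda_1,\lambda_2,\lambda_3)$. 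Setting $N(s)=V(s)^{-1}$ then yields the desired factorization $M(s)=N(s)^{-1}\Lambda(s)N(s)$, and the whole problem reduces to controlling the modulus of the roots and the conditioning of $V(s)$.

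The argument hinges on the spectrum being \emph{simple} at the endpoint $\kappa=0$, i.e.\ $s=\frac1{11}$. At this value Vieta's relations give $\lambda_1=1$ together with $\lambda_2+\lambda_3=\frac7{11}$ and $\lambda_2\lambda_3=\frac2{11}$; the discriminant $(\tfrac7{11})^2-\tfrac8{11}=-\tfrac{39}{121}<0$ shows that $\lambda_2,\lambda_3$ are genuinely complex conjugate with nonzero imaginary part. Hence the three roots are pairwise distinct at $\kappa=0$, so the Vandermonde determinant $\det V=\prod_{i<j}(\lambda_j-\lambda_i)$ is bounded away from zero there. Since $p_s$ has simple roots near $s=\frac1{11}$, the implicit function theorem makes each $\lambda_i(s)$ analytic in $s$, so the entries of $V(s)$ and $V(s)^{-1}$ are continuous in $s$; by shrinking $\kappa_0$ I obtain a uniform bound $B_2$ on $\sup_{|x|=1}(|N(s)x|+|N(s)^{-1}x|)$ over the whole range $0<\kappa\le\kappa_0$.

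For the modulus estimate, the two complex roots satisfy $|\lambda_2|^2=|\lambda_3|^2=2s/\lambda_1(s)\to\frac2{11}$ as $\kappa\to0$, so they stay trapped below $\tfrac12<1$ for small $\kappa$. The principal root $\lambda_1$ is the only one approaching the unit circle, and here I would differentiate the characteristic relation implicitly: with $P(\lambda,s)=\lambda^3-18s\lambda^2+9s\lambda-2s$, at $(\lambda,s)=(1,\frac1{11})$ one computes $\partial_\lambda P=\frac6{11}$ and $\partial_s P=-11$, whence $\lambda_1'(\tfrac1{11})=\frac{121}{6}>0$. Since $s=\frac1{11}(1-\kappa)$ gives $\frac{ds}{d\kappa}=-\frac1{11}$, a Taylor expansion yields $\lambda_1=1-\frac{11}{6}\kappa+O(\kappa^2)$, so choosing $B_1$ slightly below $\frac{11}{6}$ and $\kappa_0$ small enough absorbs the quadratic remainder and delivers $\max_i|\lambda_i(s)|\le 1-B_1\kappa$.

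The main obstacle I anticipate is precisely the uniform boundedness of $N(s)^{-1}=V(s)$ and of $N(s)=V(s)^{-1}$ as $\kappa\to0^+$: a priori one might fear an eigenvalue collision (a defective $M$) forcing $\det V\to0$ and $\|V^{-1}\|\to\infty$. The remark following the lemma already flags that $s=\frac1{11}$ is critical, so the decisive step is the discriminant computation above, which certifies that the spectrum nonetheless remains simple at the endpoint. This rules out degeneracy and is exactly what makes the uniform conditioning of the diagonalizing transformation possible.
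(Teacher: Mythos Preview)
Your argument is correct and is precisely the ``simple perturbation argument'' that the paper invokes, only you have spelled it out in full: the paper merely records the three eigenvalues $1$ and $\tfrac1{22}(7\pm i\sqrt{39})$ at $s=\tfrac1{11}$, notes they are distinct, and leaves the rest to continuity and the explicit root formulas of Lemma~\ref{lem4.1}. Your Vandermonde construction of $N(s)$, the discriminant check, and the implicit differentiation yielding $\lambda_1=1-\tfrac{11}{6}\kappa+O(\kappa^2)$ are exactly the details one would supply to make that sketch rigorous.
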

\begin{proof}
Observe that in the limiting case $s=\frac 1{11}$, the matrix $M(\frac 1{11})$
has three eigenvalues given by $1$ and $\frac 1 {22} (7 \pm i\sqrt{39})$.
The result then follows from a simple perturbation argument. One can use the
explicit formula for roots as given in Lemma \ref{lem4.1}.
\end{proof}

\section{Numerical experiments}
In the following numerical experiments, given the initial condition $h^0$, {we employ the second order Runge--Kutta method for computing $h^1$ and the BDF2/EP2 method for computing $h^2$, which ensures the third order  convergence in time.}
The Fourier pseudo-spectral method is used for spatial discretization with $N_x\times N_y$ modes. 
 
\subsection{Comparison with the stabilized scheme}
In this part, we compare the accuracy of the BDF3/EP3 scheme \eqref{eq:sch2} with the stabilized BDF3/EP3 scheme:
\begin{equation}\label{eq:sch3}
\begin{aligned}
&\frac{11h^{n+1}-18h^n+9h^{n-1}-2h^{n-2} }{6\tau}=-\eta^2\Delta^2 h^{n+1} \\
&\qquad\qquad+\nabla \cdot g\left(3\nabla h^n-3\nabla h^{n-1} +\nabla h^{n-2}\right)  - A \tau^2 \Delta^2(h^{n+1}-h^n),
\end{aligned}
\end{equation}
where $A> 0$ is the stabilization parameter and
\begin{align}
g(z) = -\frac {z }{1+|z|^2}, \quad z\in \mathbb R^2.
\end{align}
With similar proof for the BDF3/AB3 scheme in \cite{HHWang}, one can impose some restriction such as 
\begin{equation}\label{eq:restrict}
A\geq \frac{9}{32}\left(\frac{49}{16}\right)^4 \eta^{-2}\approx 24.7398 \eta^{-2}
\end{equation}
for the stabilized BDF3/EP3 scheme \eqref{eq:sch3} to preserve the modified energy dissipation property.

We take the computational domain as the periodic torus $\Omega = [-\pi,\pi]^2$. We take the diffusion parameter $\eta = 1$, the final time $T = 1$, and the number of Fourier modes $N_x\times N_y = 256\times 256$. 
For simplicity, we add a suitable forcing term on the right-hand side of \eqref{1.1}, so that the exact solution is 
\begin{equation}
h_{\mathrm{ext}}(t,x,y) = \cos(t)\sin(x)\sin(y).
\end{equation}
Then, we employ the BDF3/EP3 scheme \eqref{eq:sch2} and the stabilized BDF3/EP3 scheme \eqref{eq:sch3} (adding an implicit forcing term on the right-hand side) respectively to solve the problem. 

The $\ell_2$ and $\ell_\infty$ errors at $T$ are computed for different $\tau$ and $A$, which are illustrated in Figure \ref{fig:error}. 
It is obvious that when $A$ becomes larger, the $\ell_2$ and $\ell_\infty$ errors become larger.
In the case of no stabilization term, i.e., $A=0$, we get the best accuracy. 
This indicates that large stabilization parameter could lead to bad accuracy. 
In particular, in the case of $A = 25$, the energy dissipation law is preserved due to the restriction \eqref{eq:restrict}, but the $\ell_2$ and $\ell_\infty$ errors are hundreds of times larger than the case of no stabilization. 

Moreover, we implement similar experiments for $\eta = 0.5$ and plot the corresponding errors in Figure \ref{fig:error2}.
It can be observed that when we choose $A = 100$ so that \eqref{eq:restrict} is satisfied, the  $\ell_2$ and $\ell_\infty$ errors are still hundreds of times larger than the case of no stabilization.

\begin{figure}[!]
\centering
\includegraphics[width=0.4\textwidth]{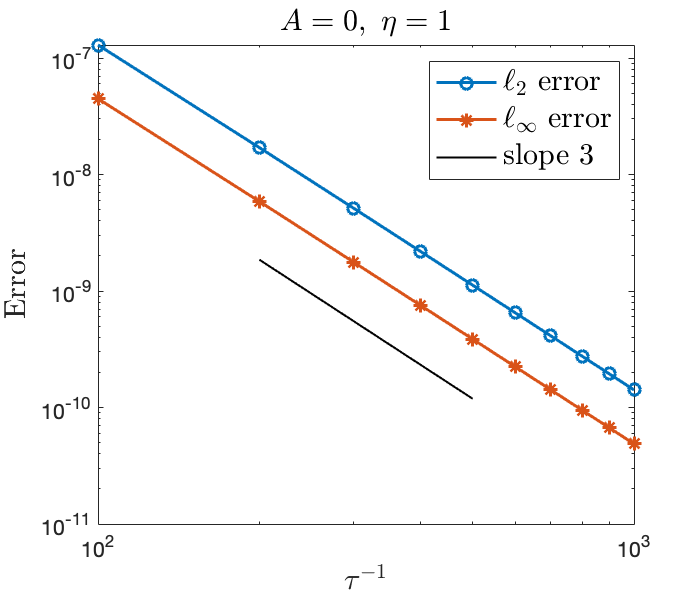}
\includegraphics[width=0.4\textwidth]{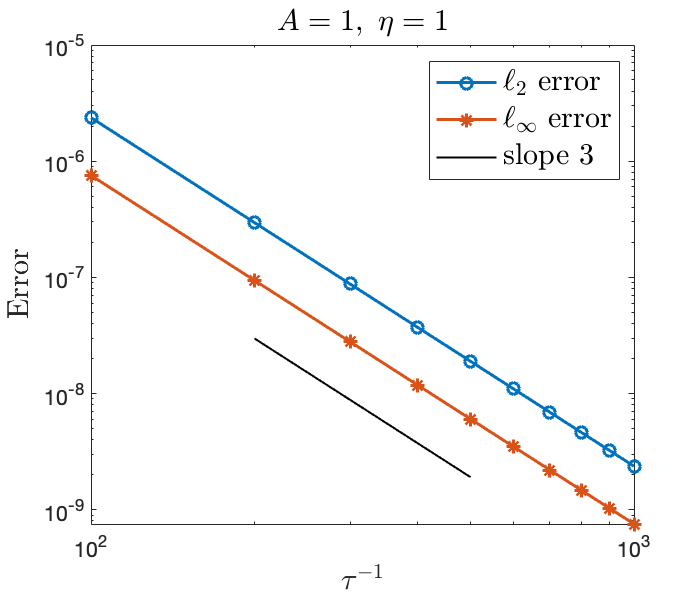}\\
\vspace{0.05in}
\includegraphics[width=0.4\textwidth]{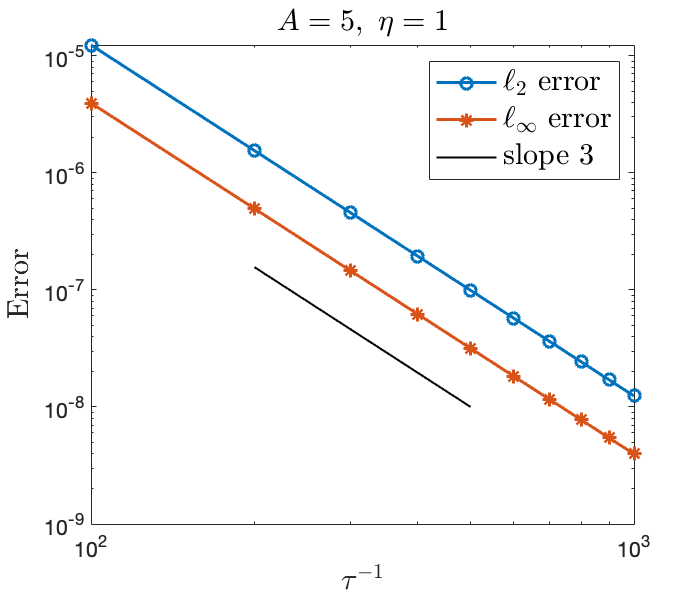}
\includegraphics[width=0.4\textwidth]{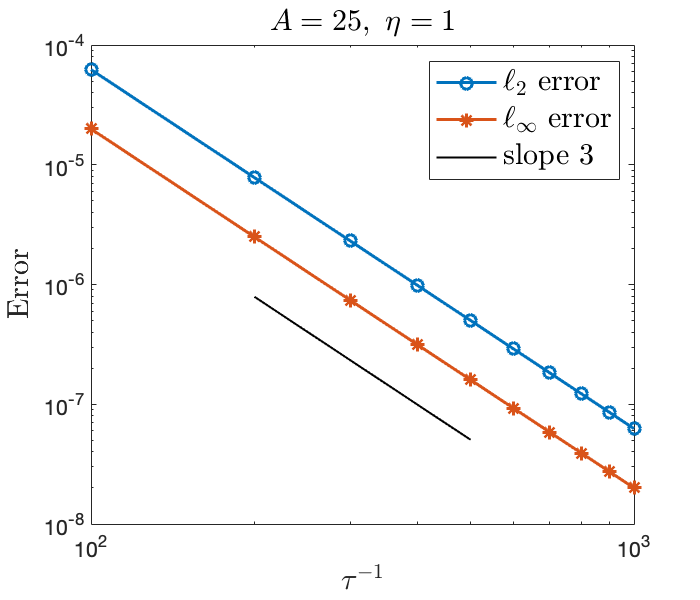}
\caption{\small The $\ell_2$ and $\ell_\infty$ errors at final time $T = 1$ w.r.t. $\tau^{-1}$, computed by the stabilized BDF3/EP3 scheme \eqref{eq:sch3} with $A = 0$ (no stabilization), $1$, $5$, and $25$ respectively, where $\eta = 1,~N_x=N_y = 256$.
 }\label{fig:error}
 \vspace{0.05in}
\centering
\includegraphics[width=0.4\textwidth]{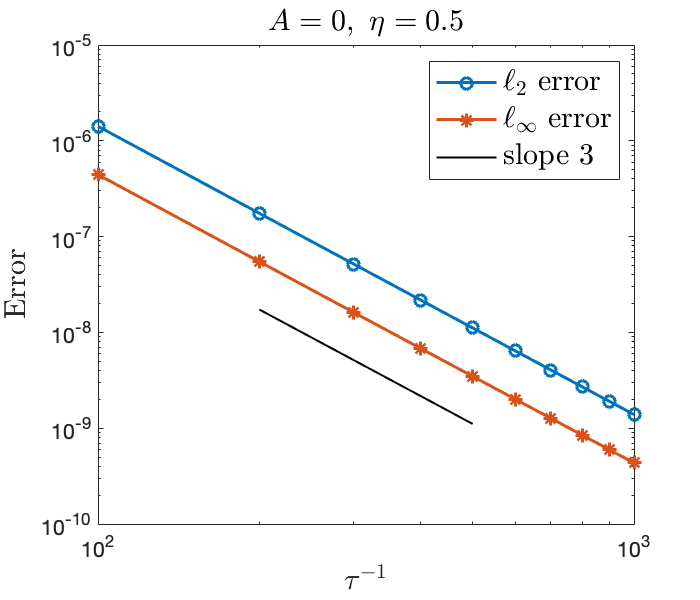}
\includegraphics[width=0.4\textwidth]{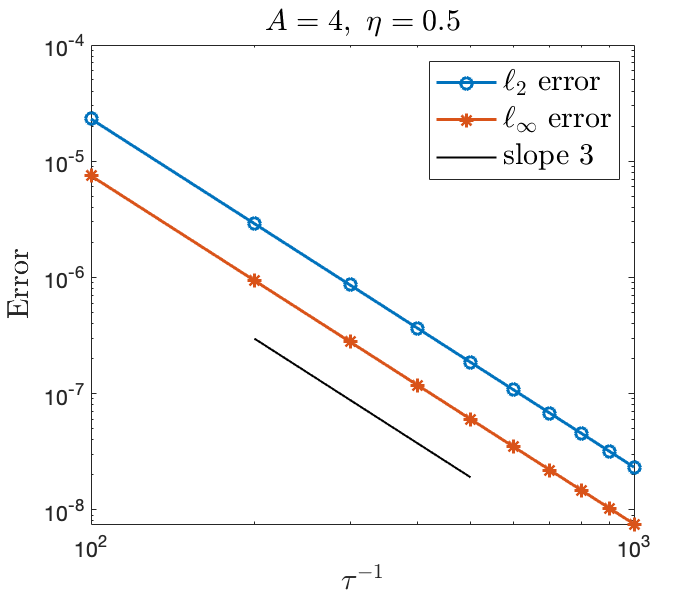}\\
\vspace{0.05in}
\includegraphics[width=0.4\textwidth]{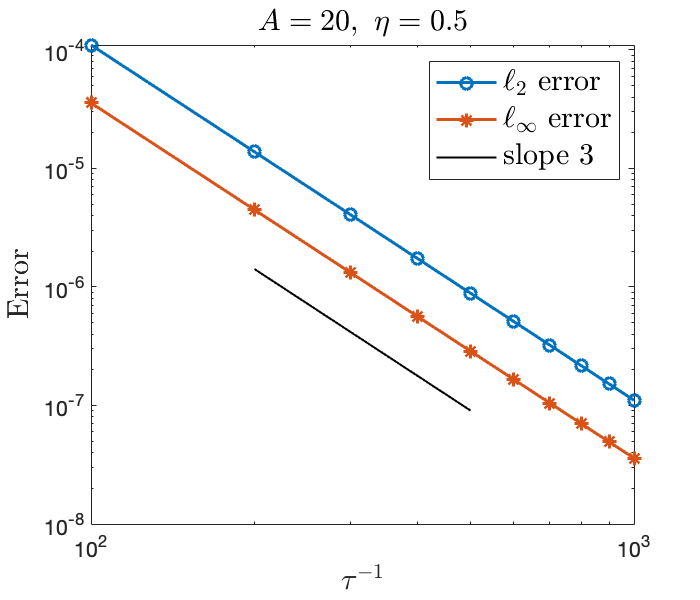}
\includegraphics[width=0.4\textwidth]{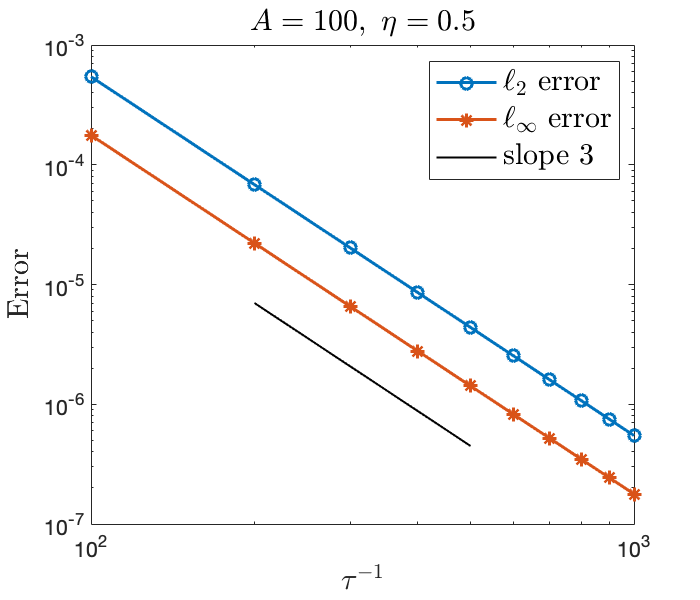}
\caption{\small The same as Figure \ref{fig:error}, except for $\eta = 0.5$ and $A = 0$ (no stabilization), $4,20$, and $100$ respectively.
 }\label{fig:error2}
\end{figure}

It seems that the unconditionally energy dissipation law is too strong a concept since large stabilization parameter might  deteriorate the accuracy. From our analysis the classical IMEX scheme without stabilization might be a better choice. 
The reason is that even if the energy dissipation is not always preserved, the accuracy seems better and the energy is uniformly bounded for any time step $\tau$ as guaranteed by Theorem \ref{thm6.1}. In yet other words, instead of pursuing unconditional energy dissipation, one can try
to accommodate the much weaker notion of unconditional energy stability which seems well suited
for many phase field models. 

{
\subsection{Relation between the standard and the modified energies}
We now clarify the relationship between the standard energy $E_n$ and the modified energy $\widetilde E_n$.
We use the BDF3/EP3 scheme \eqref{eq:sch2} to solve the 2D MBE-NSS equation.
The following parameters are used: $\Omega = [-\pi,\pi]^2$, $\eta = 0.1$, $N_x\times N_y = 256\times 256$, and $h(0,x,y)= \sin(x)\sin(y)$.
In Figure \ref{fig:energy}, the standard energy $E_n$, the modified energy $\widetilde E_n$, and their difference $\Delta E = \widetilde E_n- E_n$ are plotted w.r.t. time.
It can be observed that the standard and the modified energies are approximately the same and nearly
coincide when the time step $\tau$ gets sufficiently small.
}

\begin{figure}[!]
\centering
\includegraphics[width=0.4\textwidth]{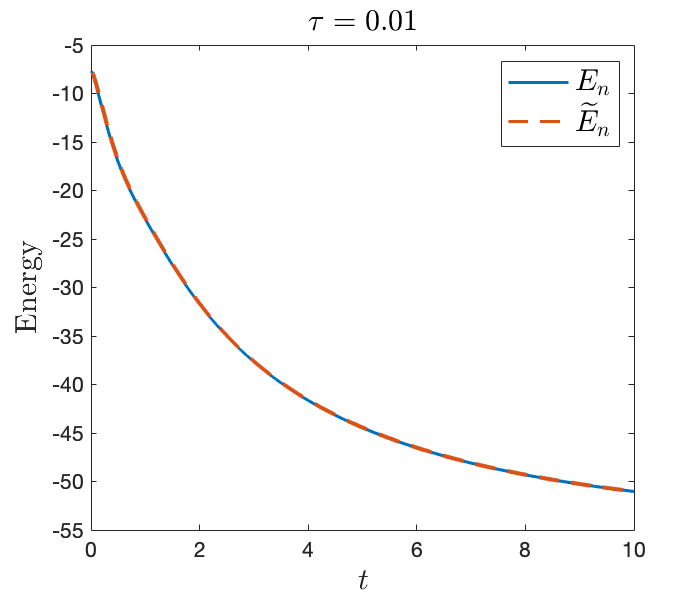}
\includegraphics[width=0.4\textwidth]{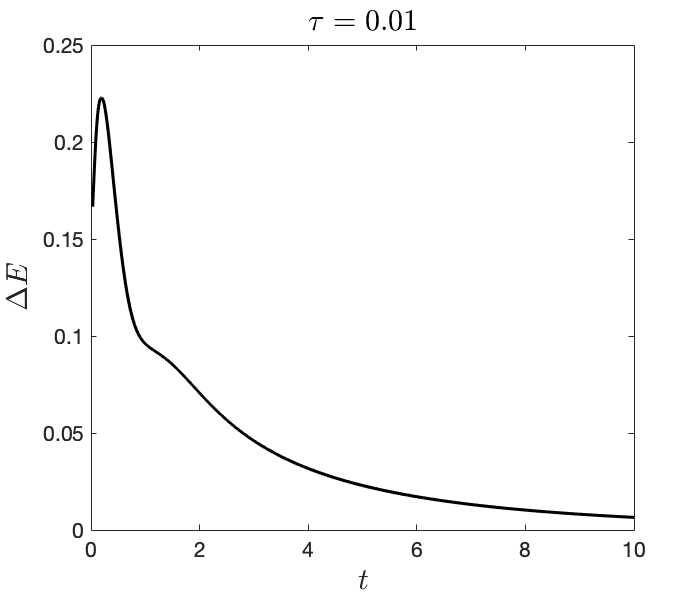}\\
\vspace{0.05in}
\includegraphics[width=0.4\textwidth]{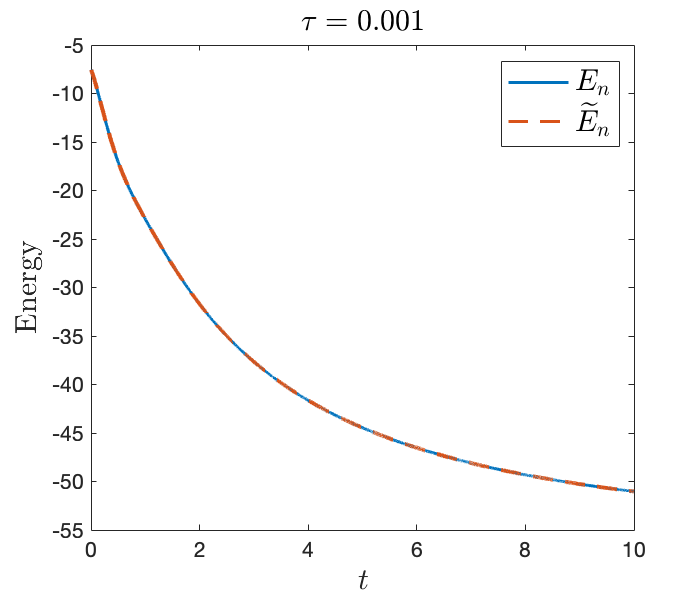}
\includegraphics[width=0.4\textwidth]{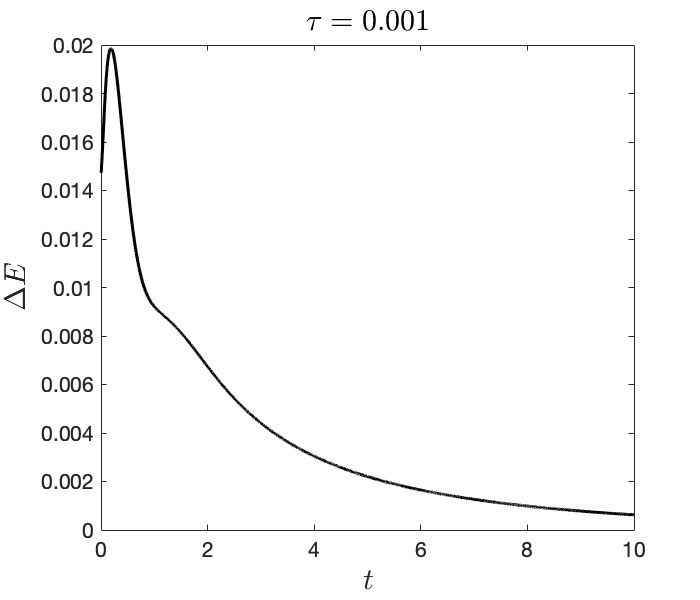}
\caption{\small Standard energy $E_n$, modified energy $\widetilde E_n$, and their difference $\Delta E = \widetilde E_n- E_n$ w.r.t. time, computed by the BDF3/EP3 scheme \eqref{eq:sch2} with $\tau=0.01$ (top) and $0.001$ (bottom). Here, $\eta = 0.1,~N_x=N_y = 256,~h(0,x,y)= \sin(x)\sin(y)$.
 }\label{fig:energy}
 \end{figure}

 { To corroborate our theory, we also test the unconditional energy boundedness for the large time step $\tau=10$.  Figure \ref{fig:energy_largestep} clearly shows that the energy remains bounded
 in time with intermittent small fluctuations violating strict monotonicity. 
}

 \begin{figure}[!]
\centering
\includegraphics[width=0.5\textwidth]{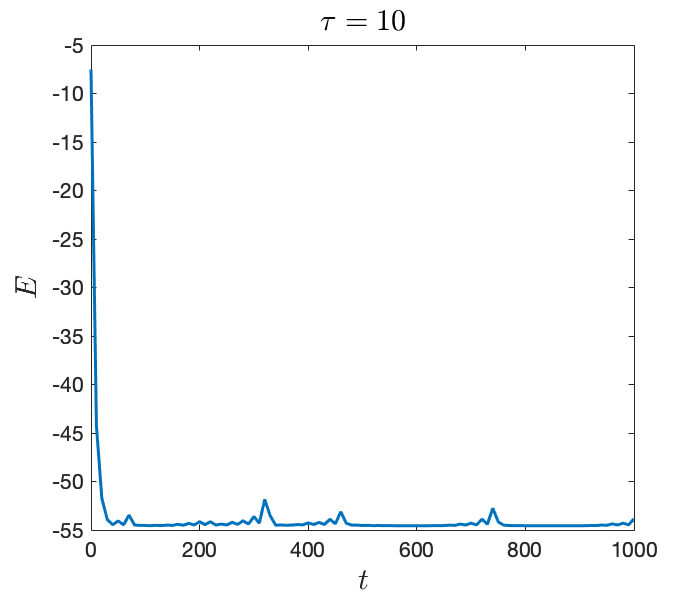}
\caption{\small  Standard energy $E$ w.r.t. time computed with very large time step $\tau= 10$ and other settings the same as in Figure \ref{fig:energy}. }\label{fig:energy_largestep}
\end{figure}

\subsection{Long time simulation}
In this part, we simulate the long time behavior of the coarsening process as described by the thin film model with no slope selection. In the course of simulation we keep track of the evolution of three physical quantities as in \cite{HHWang}: 
\begin{itemize}
\item Energy: 
\begin{align} 
\mathcal E (h)= \int_{\Omega}
\Bigl( -\frac 12 \log(1+|\nabla h|^2) + \frac 12 \eta^2 | \Delta h |^2 \Bigr) \,dx;
\end{align}
\item Characteristic height:
\begin{equation}
H(t) = \frac{1}{\sqrt{|\Omega|}}\|h(t,\cdot)-\bar h(t)\|_2,\quad \mbox{with}\quad \bar h(t) = \frac 1 {|\Omega|} \int_\Omega h(t,x)\,d x;
\end{equation}
\item Characteristic slope:
\begin{equation}
M(t) = \frac{1}{\sqrt{|\Omega|}}\|\nabla h(t,\cdot)\|_2.
\end{equation}
\end{itemize}

We take the computational domain $\Omega = [-\pi,\pi]^2$ (periodic boundary conditions) and 
the final time $T=10^5$.
The initial data is drawn from a uniform distribution in $[0,1]$. 
The following parameters are used: $\eta = 0.01$, $\tau = 0.1$, and $N_x\times N_y = 256\times 256$. 
Note that despite that $\tau = 0.1$ does not satisfies the restriction for energy decay in Theorem \ref{thm5.1}, the energy stability is guaranteed by the uniform boundedness result in Theorem \ref{thm6.1}. 
In Figure \ref{fig:nss_phase}, we illustrate the evolution of $h$ in long time. 
In Figures \ref{fig:nss_E}--\ref{fig:nss_M}, we show the evolutions of $E(t)$, $H(t)$, and $M(t)$, which are fitted respectively as 
\begin{equation}
\begin{aligned}
& E(t) \approx  -9.1691 \log(t)  -53.3853,\\
& H(t) \approx 0.4994 t^{0.4703},\\
& M(t) \approx 6.4993 t^{0.2405}.
\end{aligned}
\end{equation}
As stated in \cite{HHWang}, the lower bound for the energy decay rate is of order $-\log(t)$, and the upper bounds for the evolution rate of average height and average slope are of order $t^{1/2}$, $t^{1/4}$, respectively.
These are consistent with our numerical observations.

\begin{figure}[!]
\centering
\includegraphics[width=0.32\textwidth]{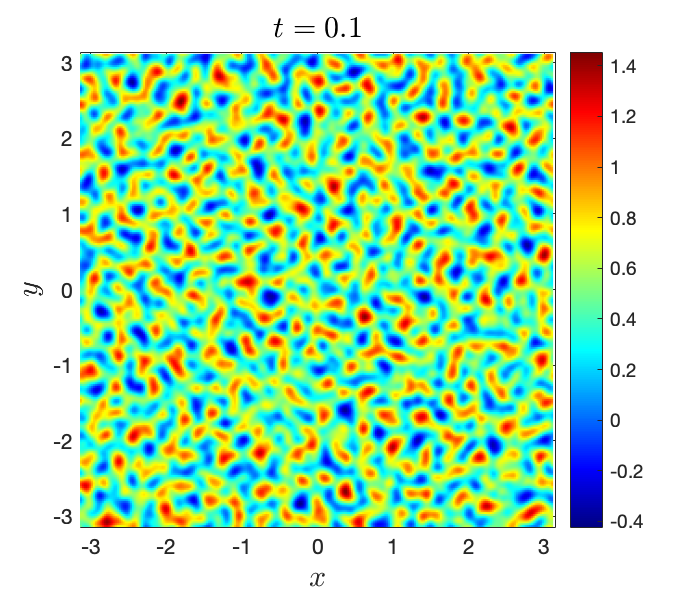}
\includegraphics[width=0.32\textwidth]{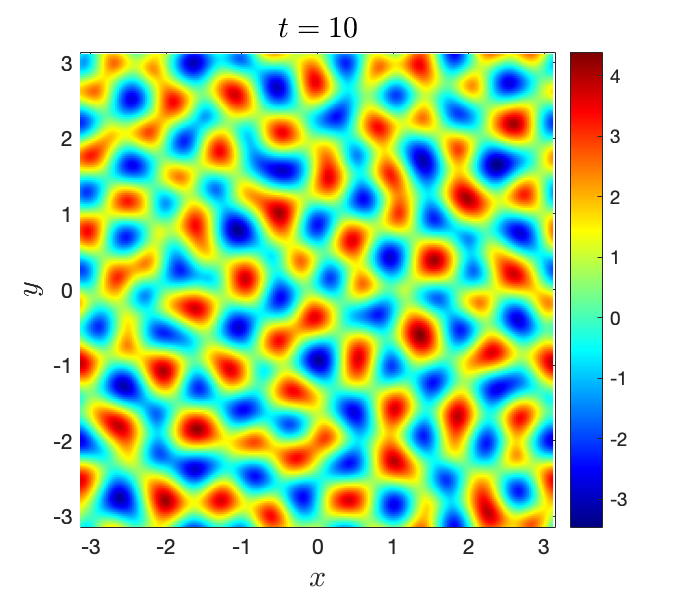}
\includegraphics[width=0.32\textwidth]{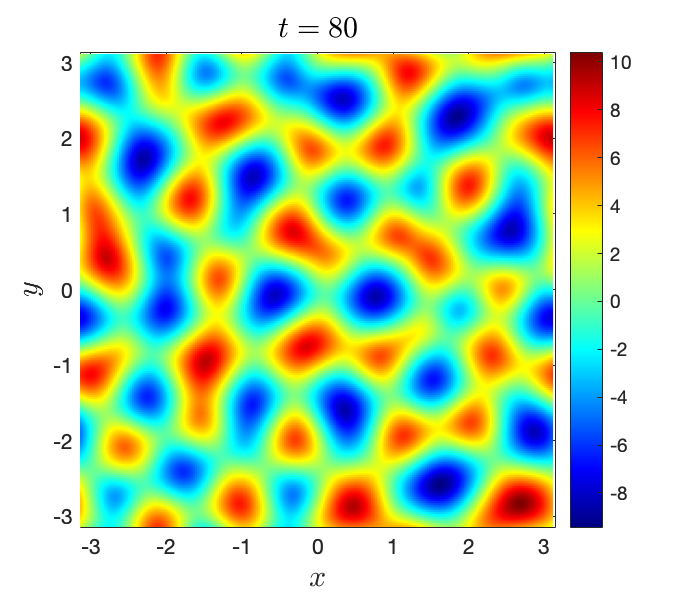}\\
\includegraphics[width=0.32\textwidth]{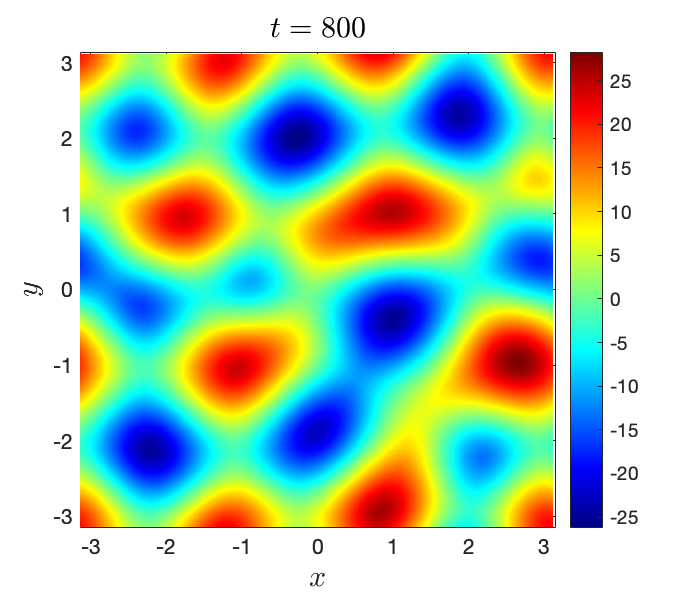}
\includegraphics[width=0.32\textwidth]{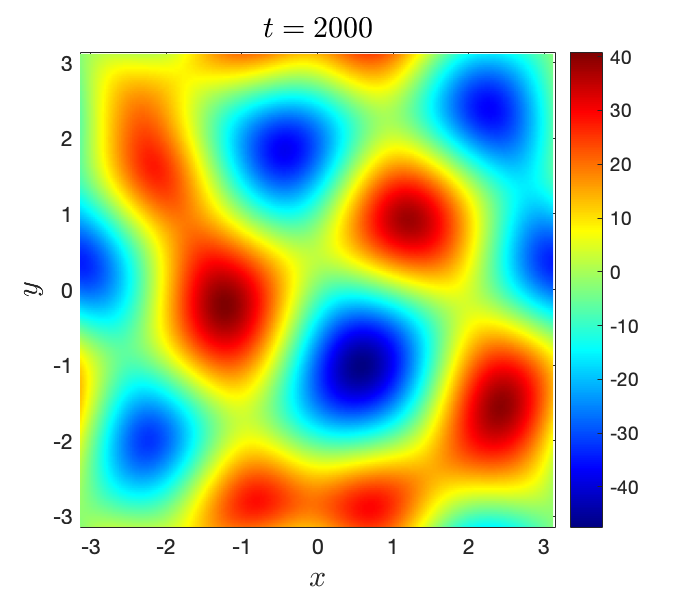}\includegraphics[width=0.32\textwidth]{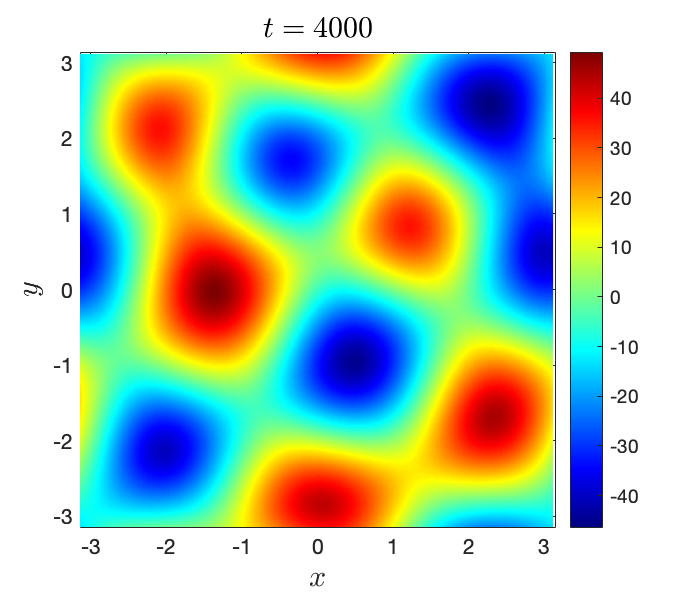}\\
\includegraphics[width=0.32\textwidth]{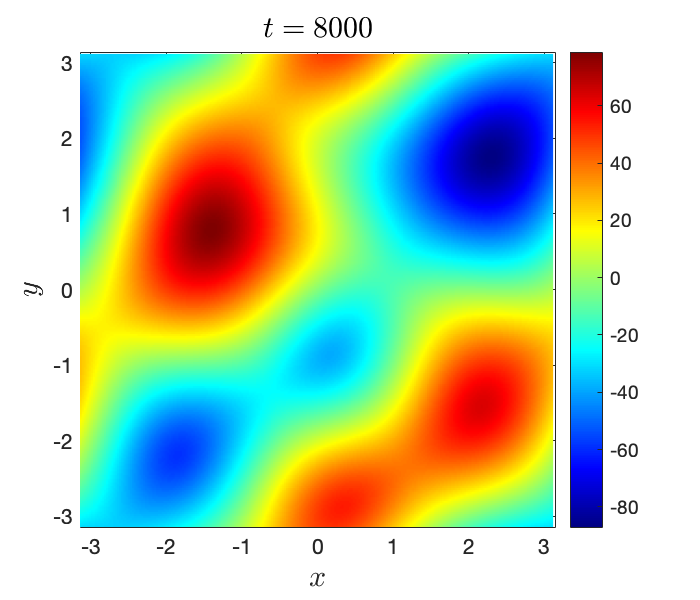}
\includegraphics[width=0.32\textwidth]{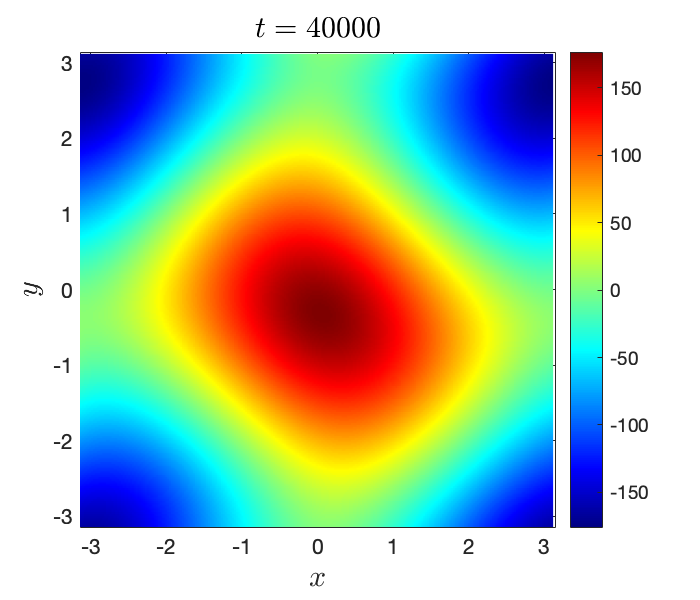}
\includegraphics[width=0.32\textwidth]{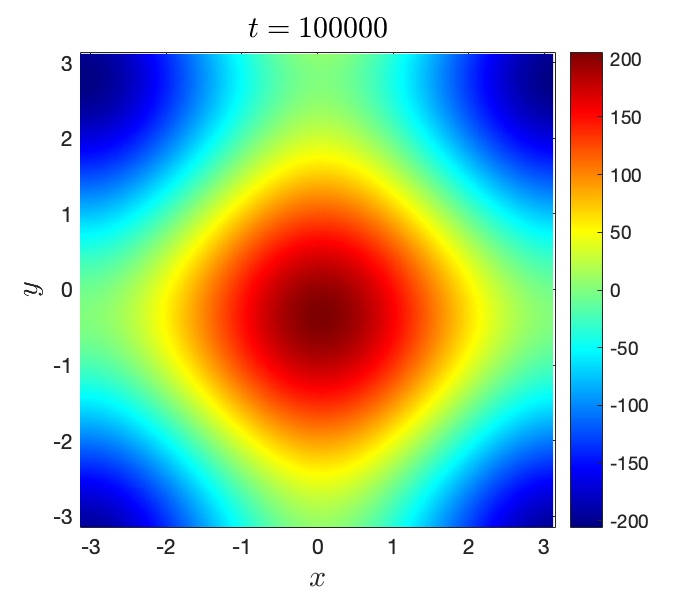}
\caption{\small Snapshots of the solution $h$ to the thin film epitaxy model \eqref{1.1} with no slope selection, computed by the BDF3/EP3 scheme \eqref{eq:sch2} with $\eta = 0.01,~\tau= 0.1,~N_x=N_y = 256$.
 }\label{fig:nss_phase}
\end{figure}

\begin{figure}[!]
\centering
\includegraphics[width=0.5\textwidth]{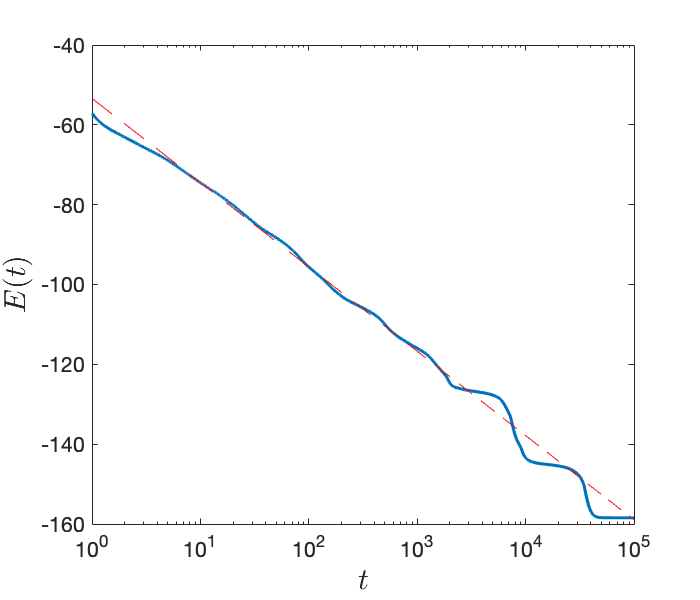}
\caption{\small  Semilog plot (blue curve) of the energy $E$ w.r.t. $t$, computed with $\eta = 0.01,~\tau= 0.1,~N_x=N_y = 256$.
The straight dashed line represents the fitting curve $a \log(t) + b$ with $a = -9.1691, ~b = -53.3853$. 
This fitting only uses the data when $1\leq t\leq 400$.
 }\label{fig:nss_E}
\end{figure}

\begin{figure}[!]
\centering
\includegraphics[width=0.5\textwidth]{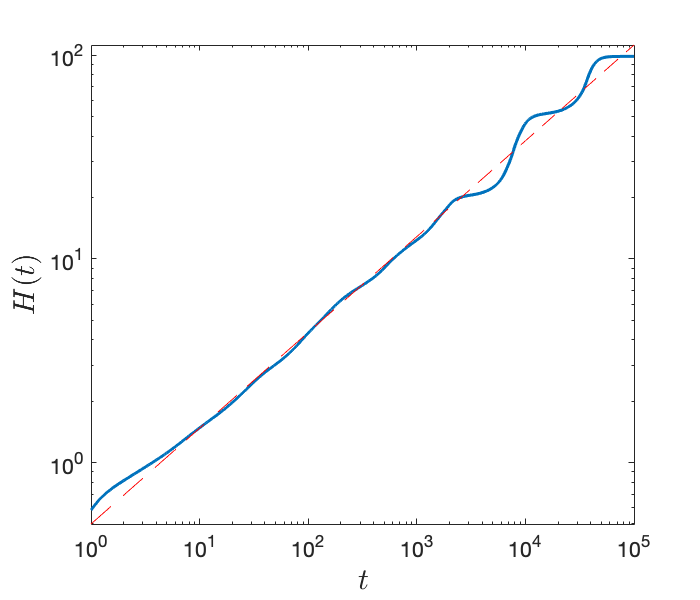}
\caption{\small  Log-log plot (blue curve) of the characteristic height $H$ w.r.t. $t$, computed with $\eta = 0.01,~\tau= 0.1,~N_x=N_y = 256$.
The straight dashed line represents the fitting curve $a t^b$ with $a = 0.4994,~b = 0.4703$.  This fitting only uses the data when $1\leq t\leq 400$.}\label{fig:nss_H}
\end{figure}

\begin{figure}[!]
\centering
\includegraphics[width=0.5\textwidth]{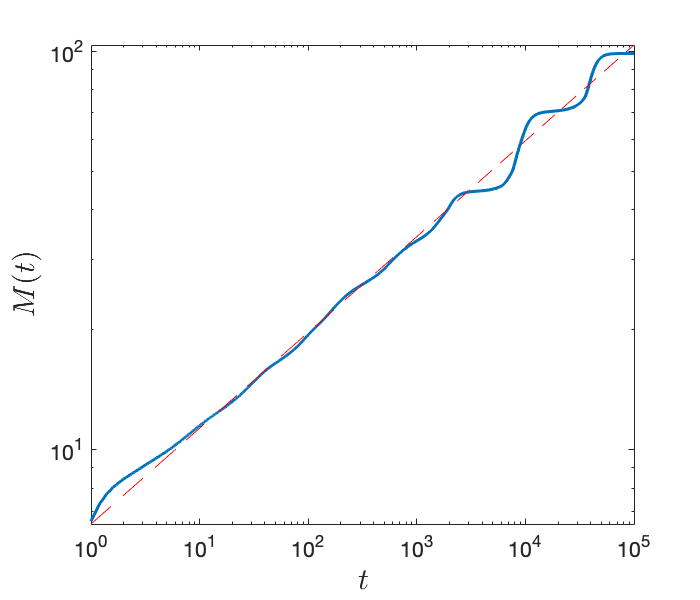}
\caption{\small  Log-log plot (blue curve) of the characteristic slope $M$ w.r.t. $t$, computed with $\eta = 0.01,~\tau= 0.1,~N_x=N_y = 256$.
The straight dashed line represents the fitting curve $a t^b$ with $a = 6.4993,~b = 0.2405$. This fitting only uses the data when $1\leq t\leq 400$. }\label{fig:nss_M}
\end{figure}

{
We also test the unconditional energy boundedness when the time step gets large. As an example we take $\tau=10$ and plot the corresponding energy evolution in Figure \ref{fig:nss_largestep}.
Clearly the energy remains uniformly bounded albeit there is no strict energy dissipation.
}
\begin{figure}[!]
\centering
\includegraphics[width=0.5\textwidth]{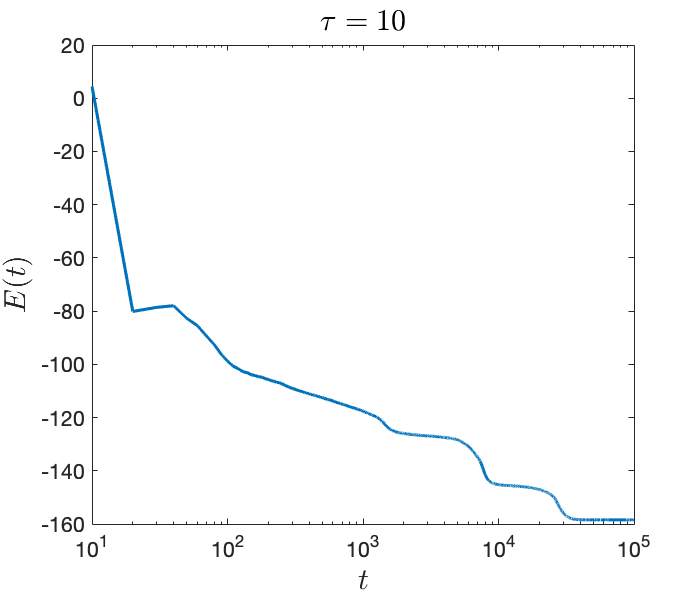}
\caption{\small   Semilog plot of the original energy $E$ w.r.t. time computed with large time step $\tau= 10$ and with other settings the same as Figure \ref{fig:nss_E}. }\label{fig:nss_largestep}
\end{figure}

\section{Concluding remarks}
In this work we considered the classic MBE model with no slope selection.   We use BDF3 for temporal discretization and implicit treatment for the surface diffusion term. The nonlinearity is approximated 
 by an explicit EP3 method. For this BDF3/EP3 method we identified explicit time step constraints 
 and rigorously proved the modified energy dissipation law. Furthermore we introduced 
 a new theoretical framework and showed that the $H^2$-norm of the numerical solutions are unconditionally uniformly bounded, i.e.  the obtained upper bound is independent of the time step. 
  We developed a novel framework for the error analysis for high order methods.  
  To our best knowledge, these kind of results are the first in the literature, albeit for
   a restrictive class of phase field models whose nonlinearity has bounded derivatives.
  We also carried out several numerical
 experiments which show good accordance with theoretical predictions.
It is expected that our new theoretical framework
 can be generalized to many other phase-field models with {\emph{benign (i.e. Lipschitzly bounded)}} nonlinearities. 
 
 \vspace{0.1cm}
 
{\bf Acknowledgement.}
The research of W. Yang is supported by NSFC Grants 11801550 and 11871470.
The work of C. Quan is supported by NSFC Grant 11901281, the Guangdong Basic and Applied Basic Research Foundation (2020A1515010336), and the Stable Support Plan Program of Shenzhen Natural Science Fund (Program Contract No. 20200925160747003).

\frenchspacing
\bibliographystyle{plain}


\end{document}